\theoremstyle{plain}
\newtheorem{thm}{Theorem}[section]
\newtheorem{cor}[thm]{Corollary}
\newtheorem{lem}[thm]{Lemma}
\newtheorem{prop}[thm]{Proposition}
\newtheorem{conj}[thm]{Conjecture}
\newtheorem{ques}[thm]{Question}
\newtheorem{fact}[thm]{Fact}
\newtheorem{facts}[thm]{Facts}
\newtheorem{rem}[thm]{Remark}
\theoremstyle{definition}
\newtheorem{example}[thm]{Example}
\newtheorem{defn}[thm]{Definition}
\theoremstyle{plain} 
\newaliascnt{cor}{thm}
\newtheorem{cor}[cor]{Corollary}
\crefname{cor}{corollary}{corollaries}
\Crefname{cor}{Corollary}{Corollaries}
\newaliascnt{lem}{thm}
\newtheorem{lem}[lem]{Lemma}
\crefname{lem}{lemma}{lemmas}
\Crefname{lem}{Lemma}{Lemmas}
\newaliascnt{prop}{thm}
\newtheorem{prop}[prop]{Proposition}
\crefname{prop}{proposition}{propositions}
\Crefname{prop}{Proposition}{Propositions}
\newaliascnt{conj}{thm}
\crefname{conj}{conjecture}{conjectures}
\Crefname{conj}{Conjecture}{Conjectures}
\crefname{ques}{question}{questions}
\Crefname{ques}{Question}{Questions}
\newaliascnt{fact}{thm}
\crefname{fact}{fact}{facts}
\Crefname{fact}{Fact}{Facts}
\newaliascnt{facts}{thm}
\crefname{facts}{fact}{facts}
\Crefname{facts}{Fact}{Facts}
\newaliascnt{rem}{thm}
\newtheorem{rem}[rem]{Remark}
\crefname{rem}{remark}{remarks}
\Crefname{rem}{Remark}{Remarks}
\theoremstyle{definition} 
\newaliascnt{example}{thm}
\newtheorem{example}[example]{Example}
\crefname{example}{example}{examples}
\Crefname{example}{Example}{Examples}
\newaliascnt{defn}{thm}
\newtheorem{defn}[defn]{Definition}
\crefname{defn}{definition}{definitions}
\Crefname{defn}{Definition}{Definitions}
\crefname{clai}{claim}{claims}
\Crefname{clai}{Claim}{Claims}
\numberwithin{equation}{section}
\def\G{\mathcal{G}}
\def\d{\delta}
\def\k{\kappa}
\def\d{\delta}
\def\k{\kappa}
\def\RR{{\mathbb R}}
\def\NN{{\mathbb N}}
\def\d{{\mathrm{d}}}
\def\id{\mathrm{id}}
\def\SS{{\mathbb{S}}}
\def\bar{\overline}
\setlist[itemize]{noitemsep, topsep=0pt}
\newcommand{\vast}{\bBigg@{2}}
\newcommand{\Vast}{\bBigg@{5}}
\newcommand{\RNum}[1]{\uppercase\expandafter{\romannumeral #1\relax}}
\title[Tonelli Lagrangians on Half Lie-Groups]{On Mañé's Critical Value for\\ Tonelli Lagrangians on Half Lie-Groups}
\author{Levin Maier}
\address{Faculty of Mathematics and Computer Science,
	University of Heidelberg,
	Im Neuenheimer Field 205,
	69120 Heidelberg, Germany}
\email{lmaier@mathi.uni-heidelberg.de}
\author{Francesco Ruscelli}
\address{Faculty of Mathematics and Computer Science,
	University of Heidelberg,
	Im Neuenheimer Field 205,
	69120 Heidelberg, Germany}
\email{fruscelli@mathi.uni-heidelberg.de}
\keywords{}
\subjclass[2020]{}
\begin{document}
	\maketitle
\renewcommand{\abstractname}{Abstract}
	\begin{abstract}
In this article, we introduce \emph{Tonelli Lagrangians} on half-Lie groups equipped with a strong right-invariant Riemannian metric. 
These are right-invariant Lagrangians defined on the tangent bundle of a half-Lie group with quadratic growth on each fiber.
The main examples of half-Lie groups are groups of \( H^s \) or \( C^k \) diffeomorphisms of compact manifolds. \\[0.3em]
We show that the Euler–Lagrange flow exists globally. We then introduce three thresholds of the energy, called the Mañé critical values, and prove that under mild regularity and completeness assumptions on the half-Lie group any two points can be connected by a global Tonelli minimizer above the lowest of these energy thresholds. Under an additional assumption on the Lagrangian, such a minimizer is a flow line of the Euler–Lagrange flow.\\[0.3em]
This extends the work of Contreras from closed finite-dimensional manifolds to the infinite-dimensional context. 
Moreover, our results also extend the recent work of Bauer, Harms, and Michor from geodesic flows to Euler–Lagrange flows of Tonelli Lagrangians.\\[0.3em]
As an application, we obtain global well-posedness of all Euler–Poincaré equations associated with Tonelli Lagrangians on half-Lie groups equipped with strong right-invariant Riemannian metrics.
	\end{abstract}	
	\section{Introduction}\label{s:introduction}
\textbf{Infinite-dimensional Hamiltonian systems} trace back at least to the birth of modern quantum mechanics, specifically to E.~Schrödinger’s formulation~\cite{schrodinger1926quantisierung1} of wave mechanics via the celebrated Schrödinger equation. \\
Since then, many prominent partial differential equations have been recognized to admit a formulation as infinite-dimensional Hamiltonian systems—for instance, the Korteweg–de Vries equation, the nonlinear Schrödinger equation, the Euler equations from incompressible hydrodynamics and the Burgers equation, as well as the sine-Gordon, Camassa–Holm, and Benjamin–Ono equations, and certain equations in optimal transport. See~\cite{AK98, Khesin-Mis-Mod-inf-Newton} and the references therein for further details.\\
This insight has motivated a significant body of work devoted to studying the theoretical properties of infinite-dimensional Hamiltonian systems. In this work, we focus on Hamiltonian systems on \emph{half-Lie groups}. \\

\textbf{Half-Lie groups} are smooth manifolds and topological groups for which right translations are smooth, while left translations are only required to be continuous. Examples in which left translations are only continuous exist only in infinite dimensions. 

Their study is motivated by the fact that such groups arise naturally in the context of Arnold’s geometric formulation of mathematical hydrodynamics~\cite{Arnold66}. More precisely, Arnold showed that the Euler equations of hydrodynamics, which govern the motion of an incompressible and inviscid fluid in a fixed domain (with or without boundary), can be interpreted as the geodesic equations on the group of volume-preserving diffeomorphisms endowed with a right-invariant Riemannian metric. 

However, the group of volume-preserving diffeomorphisms is a tame Fréchet–Lie group, which leads to significant analytical difficulties. Following the approach of Ebin and Marsden~\cite{EM70}, one can instead work with $C^k$- or $H^s$-diffeomorphism groups, as first studied by Eells, Eliasson, and Palais~\cite{Eells66, Eliasson67, Palais68}, which provide the main examples of half-Lie groups.

Before moving on, we recall that in finite dimensions many Hamiltonian systems of physical interest arise as the Legendre duals of Lagrangian systems—for example, the motion of charged particles in a magnetic field and under potential forces. We now aim to introduce, in the infinite-dimensional setting, a natural class of Lagrangians to which the aforementioned electromagnetic systems naturally belong.\\

\textbf{Right-invariant Tonelli Lagrangians on half-Lie groups.} 
We first fix some notation. For a half-Lie group \( G \), we denote by \( e \) its neutral element and equip it with a \emph{strong}, \( G \)-right-invariant Riemannian metric \( \mathcal{G} \). For a precise definition of this notion, see \Cref{Def: strong right invariant Riemannian metrics}. 

From now on, we denote by \( (G, \mathcal{G}) \) a half-Lie group equipped with a strong Riemannian metric \( \mathcal{G} \). 
We call a $G$-right-invariant $C^2$-Lagrangian  
\[
L \colon TG \to \mathbb{R}
\]
\emph{Tonelli} on \( (G, \mathcal{G}) \) if the map \( v \mapsto L(e,v) \) is uniformly convex and grows quadratically in the fibers.
If, in addition, $\d_x L$ grows at most quadratically in $v$ within a neighborhood of the identity, we call $L$ \emph{strongly Tonelli}. 
For a precise meaning, we refer to \Cref{Def: rightv inv Tonelli Langrangian}.
\begin{rem}\label{rem: intro magnetic Langrang are Tonelli int}
A natural subclass of the space of Tonelli Lagrangians on $(G,\mathcal{G})$ is given by the electromagnetic Lagrangians, that is, Lagrangians of the form  
\begin{equation}\label{eq: electromagnetic lagrangians int}
    L(x,v) = \tfrac{1}{2}\,\Vert v\Vert^2_{\mathcal{G}} - \theta_x(v) + V(x)\, ,
\end{equation}
where $\theta \in \Omega^1(G)$ and $V \in C^{\infty}(G,\RR)$ are $G$-right-invariant. For $\theta = 0$ and $V = 0$, we simply recover the kinetic Lagrangian, whose flow lines are precisely the geodesics of $(G,\mathcal{G})$.
\end{rem}

The Tonelli assumptions imply that the Euler–Lagrange equation of \( L \), which in local coordinates can be written as
\begin{equation}\label{eq: ELE}
    \frac{\mathrm{d}}{\mathrm{d}t} \big(\partial_v L\big)_{(\gamma, \dot{\gamma})}
    = \big(\partial_x L\big)_{(\gamma, \dot{\gamma})},
\end{equation} 
is well-posed and defines a smooth flow \( \Phi_L \) on \( TG \), called the \emph{Euler–Lagrange flow} of \( L \) (see \Cref{lemm: ele define EL-flow}). 
These flow lines preserve the \emph{energy} of the system,
\begin{equation}\label{eq:energy}
    E \colon TM \to \mathbb{R}, \quad (x, v) \longmapsto (D_v L)_{(x, v)}(v) - L(x, v).
\end{equation}

Furthermore, in the case of a strong Tonelli Lagrangian $L$, the solutions of \eqref{eq: ELE} with energy \( E = \kappa \) are precisely the critical points of the \emph{time-free Lagrangian action functional} associated with \( L \) and the energy level \( \kappa \), which is given by
\begin{equation}\label{eq: def free action functional}
    \mathbb{S}_{L+\kappa}(\gamma) 
    := \int_0^T \bigg( L \big( \gamma(t), \dot{\gamma}(t) \big) + \kappa \bigg) \, \mathrm{d}t,
    \quad 
    \gamma \in H^1([0,T], G).
\end{equation}
Analogously to the finite-dimensional case, the dynamics of Tonelli Lagrangians admit a dual formulation. Indeed, the Tonelli assumptions in \Cref{Def: rightv inv Tonelli Langrangian} guarantee that the following map induced by \( L \), called the \emph{Legendre transform},
\begin{equation}\label{eq:defi Legendre transform}
    \mathcal{L} \colon TG \longrightarrow T^*G,
    \quad (x, v) \longmapsto \big(x, (D_v L)_{(x, v)}\big),
\end{equation}
is a diffeomorphism (see~\Cref{lem: Tonelli implies legendre is diffeo} for details).
As in the finite-dimensional case, we can use this to associate to a given Tonelli Lagrangian \( L \) a Hamiltonian by  
\begin{equation}\label{eq: Tonelli Hamiltonian}
    H := E \circ \mathcal{L}^{-1} \colon T^*G \longrightarrow \mathbb{R},
\end{equation}
as its Legendre dual, which is often called a \textit{Tonelli Hamiltonian} in the literature.
The Legendre transform then conjugates the Euler--Lagrange flow of \( L \) to the Hamiltonian flow of \( H \).

Unlike the flow lines of the kinetic Lagrangian, which correspond to geodesics, the flow lines of the Euler–Lagrange flow associated with a general Tonelli Lagrangian cannot, in general, be reparametrized to have unit speed. 
This can be seen, for instance, for electromagnetic Lagrangians \eqref{eq: electromagnetic lagrangians int}, 
since the kinetic term there scales quadratically with speed, whereas the magnetic term $\theta$ scales only linearly with speed. 
Therefore, one of the main points of interest is to understand the similarities and differences between the flow lines of the Euler–Lagrange flow of the kinetic and general Tonelli Lagrangians.  
\medskip

\paragraph{\textbf{Mañé's critical values.}} On finite-dimensional closed manifolds an important role is played by \emph{Mañé’s critical values}~\cite{CIPP, Man}, which marks energy thresholds indicating significant dynamical and geometric transitions in the Euler--Lagrange flow. This notion can be extended to our infinite-dimensional setting. The relevant energy values are the numbers
\[
\min E \le e_0(L) \le c_u(L) \le c_0(L) \leq c(L),
\]
where \( e_0(L) \) is the maximal critical value of \( E \); 
the \emph{lowest Mañé critical value} \( c_u(L) \) is minus the infimum of the mean Lagrangian action
\begin{equation}\label{eq: mean lagrangian action}
    \frac{1}{T} \int_0^T L(\gamma(t), \dot{\gamma}(t))\,\d t,
\end{equation}
over all contractible closed curves \( \gamma \); 
and \( c_0(L) \), called the \emph{strict Mañé critical value}, is minus the infimum of the mean Lagrangian action~\eqref{eq: mean lagrangian action}
over all null-homologous closed curves. 
Finally, the \emph{Mañé critical value} $c(L)$ is defined as minus the infimum 
of the mean Lagrangian action~\eqref{eq: mean lagrangian action} taken over all closed curves. 
For a precise formulation, we refer to \Cref{Def: Manes value for Tonelli Langrangians}. 
We are now in a position to state the main theoretical contribution of this article.

\subsection{Main results} The central goal of this work is to extend the Hopf–Rinow theorem~\cite[Thm.~7.7]{Bauer_2025} to the setting of (strong) Tonelli Lagrangians on half-Lie groups. We begin by stating the analogue of geodesic completeness for the Euler--Lagrange flow of a Tonelli Lagrangian.

\begin{thm}\label{prop: global existince of flow lines}
    Let $L \colon TG \to \mathbb{R}$ be a Tonelli Lagrangian on $(G, \mathcal{G})$. Then every flow line of the Euler–Lagrange flow of $L$ on $TG$ is maximally defined on all of~$\mathbb{R}$. Equivalently, every flow line of the Hamiltonian flow on $T^*G$ associated with the Legendre dual $H$ of $L$ is maximally defined on all of~$\mathbb{R}$. \\
    Moreover, this completeness statement also holds for the restriction of the Lagrangian system 
    \((TG^\ell, L)\) for all $\ell \geq 1$ on the weak Riemannian manifolds \((G^\ell, \mathcal{G})\), 
    where $\mathcal{G}$ denotes the restriction of the Riemannian metric and $L$ the restriction of the 
    Lagrangian from $G$ to $G^\ell$, with \( G^\ell \) denoting the space of \( C^\ell \)-elements in \( G \) in the sense of \Cref{Def: ck elements}.
\end{thm}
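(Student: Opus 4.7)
The plan is a contradiction argument combining energy conservation with the quadratic coercivity built into the Tonelli assumptions, the Hopf--Rinow completeness theorem of Bauer--Harms--Michor~\cite[Thm.~7.7]{Bauer_2025}, and the observation that right-invariance reduces the evolution of the right-trivialized velocity to an autonomous equation on the fixed Hilbert space $T_e G$. Local smoothness of the Euler--Lagrange flow is already supplied by \Cref{lemm: ele define EL-flow}, so it suffices to rule out finite-time blow-up of maximal flow lines.

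Suppose, for contradiction, that $(\gamma,\dot\gamma)\colon [0, T^*) \to TG$ is a maximal flow line with $T^* < \infty$, and set $\kappa := E(\gamma(0),\dot\gamma(0))$. Since the energy $E$ is a first integral of the Euler--Lagrange flow, $E(\gamma(t),\dot\gamma(t)) = \kappa$ for all $t \in [0, T^*)$. The right-invariance of both $L$ and $\mathcal{G}$ implies that $E$ depends only on the right-trivialized velocity
\[
\xi(t) \;:=\; (R_{\gamma(t)^{-1}})_\ast \dot\gamma(t) \;\in\; T_e G ,
\]
and the uniform convexity and quadratic growth of $L(e,\cdot)$ together yield constants $\alpha>0$ and $\beta\ge 0$ with $E(e,v) \ge \alpha\, \|v\|_{\mathcal{G}}^2 - \beta$ for every $v \in T_e G$. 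Since $\|\dot\gamma(t)\|_{\mathcal{G}} = \|\xi(t)\|_{\mathcal{G}}$, we obtain the uniform bound $\|\dot\gamma(t)\|_{\mathcal{G}} \le C := \sqrt{(\kappa+\beta)/\alpha}$ for every $t \in [0,T^*)$.

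Consequently $\gamma$ is $C$-Lipschitz in $(G,d_{\mathcal{G}})$, and the metric completeness provided by~\cite[Thm.~7.7]{Bauer_2025} produces a limit $x^* \in G$ with $\gamma(t)\to x^*$ as $t\to T^*$. To extend the velocity, one exploits right-invariance: in right-trivialized coordinates the Euler--Lagrange equation decouples into an autonomous Euler--Poincaré ODE for $\xi$ on the Hilbert space $T_e G$ together with the reconstruction equation $\dot\gamma = (R_\gamma)_\ast \xi$ on $G$. The autonomous ODE for $\xi$ is smoothly well-posed on $T_e G$ (this is exactly where the strong right-invariance of $\mathcal{G}$ is used), so the uniform bound $\|\xi(t)\|_{\mathcal{G}} \leq C$ forces $\xi$ to be Cauchy at $T^*$ with some limit $\xi^* \in T_e G$, and then $\dot\gamma(t) \to (R_{x^*})_\ast \xi^*$. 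Thus $(\gamma(T^*), \dot\gamma(T^*)) := (x^*, (R_{x^*})_\ast \xi^*) \in TG$ is a well-defined state, and applying local existence of the flow at this point extends the flow line strictly past $T^*$, contradicting the maximality of $T^*$. The Hamiltonian statement is immediate, since the Legendre transform~\eqref{eq:defi Legendre transform} is a smooth diffeomorphism intertwining the Euler--Lagrange and Hamiltonian flows.

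For the second part, $L|_{TG^\ell}$ remains a right-invariant Tonelli Lagrangian on the weak Riemannian manifold $(G^\ell,\mathcal{G})$ for every $\ell \geq 1$, because the Tonelli hypotheses are imposed at $e \in G^\ell$ and are independent of $\ell$. Energy coercivity again yields $\|\dot\gamma(t)\|_{\mathcal{G}} \le C$, the Hopf--Rinow statement of Bauer--Harms--Michor applies to $(G^\ell,d_{\mathcal{G}})$ and supplies $x^* \in G^\ell$, and the Euler--Poincaré ODE on $T_e G$ is independent of the regularity class, so the velocity extension transfers verbatim. I expect the main obstacle to be this velocity extension step: in finite dimensions it is automatic once $\gamma(t)$ has a limit, but in the present infinite-dimensional setting one has to invoke the right-invariant reduction to a fixed Hilbert space, and the strong (rather than merely weak) nature of $\mathcal{G}$ is precisely what guarantees both well-posedness of the reduced dynamics on $T_e G$ and that boundedness of $\xi$ in $\|\cdot\|_{\mathcal{G}}$ genuinely controls $\dot\gamma$ as a curve in $TG$.
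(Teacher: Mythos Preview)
Your overall strategy---a contradiction argument combining energy conservation, the quadratic coercivity of $E$, and metric completeness of $(G,d_{\mathcal G})$ from \cite[Thm.~7.7]{Bauer_2025}---is exactly the paper's, and your derivation of the uniform speed bound and of $\gamma(t)\to x^*$ is correct. The difficulty lies in your velocity-extension step. You invoke a right-trivialized Euler--Poincar\'e reduction to a smooth autonomous ODE $\dot\xi = Y(\xi)$ on $T_e G$, but this is not available under the paper's standing hypotheses: for a general half-Lie group the right trivialization $TG\to G\times T_eG$ need not be a smooth diffeomorphism (inversion and left translation are only continuous), and the paper invokes the Euler--Poincar\'e equation only for the specific example $\mathrm{Diff}^{H^{s+1}}(M)$ (see the discussion around~\eqref{eq: EP eq}), where the existence of $\mathrm{ad}^*$ is supplied separately by \cite[Lemma~7.4]{Bauer_2025}. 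Moreover, even granting such a smooth $Y$, the inference ``$\|\xi(t)\|_{\mathcal G}\le C$ forces $\xi$ to be Cauchy'' is not valid in infinite dimensions: closed balls in a Hilbert space are not compact, so a smooth vector field need not be bounded or uniformly Lipschitz on them, and a bounded maximal integral curve need not extend. The paper's own treatment of this step is in fact very terse---it records only that $\gamma(t_i)$ converges and that the real sequence $(\|\dot\gamma(t_i)\|)$ has a convergent subsequence, and then asserts extension---so you are not overlooking a detailed mechanism there; but your elaboration through Euler--Poincar\'e introduces hypotheses the paper does not impose.

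For the $G^\ell$ statement the paper takes a genuinely different and cleaner route: rather than rerunning the argument on the weak Riemannian manifold $(G^\ell,\mathcal G)$, it deduces completeness on $TG^\ell$ directly from completeness on $TG$ via the no-loss--no-gain result (\Cref{prop: no gain no loose}), which guarantees that the Euler--Lagrange flow on $TG$ already preserves $TG^\ell$. Your plan to apply \cite[Thm.~7.7]{Bauer_2025} to $(G^\ell,d_{\mathcal G})$ is problematic because that theorem is formulated for \emph{strong} right-invariant metrics, whereas the restriction of $\mathcal G$ to $G^\ell$ is only weak.
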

In the case of geodesic flows in infinite dimensions, the Hopf--Rinow theorem (and more precisely the statement that metric completeness implies geodesic convexity) fails \cite{HopfrinowfalseAktkin}. 
Motivated by this, we address the existence of global minimizers of the Lagrangian action functional connecting two given points for energy values above Mañé’s critical value, under mild regularity and completeness assumptions on the half-Lie group~$G$.

In order to state our next result, we first fix some notation. 
For two points \( p, q \in G \), we denote by
\begin{equation*}
    \mathcal{P}(p, q) 
    = \{ \gamma \in H^1([0, T], G) \mid \gamma(0) = p, \; \gamma(T) = q, \; T > 0 \}
\end{equation*}
the space of \( H^1 \)-paths connecting \( p \) and \( q \). 
A path \( \gamma \in \mathcal{P}(p, q) \) is called a \textit{global minimizer} of \( \mathbb{S}_{L + \kappa} \) if 
\begin{equation*}
    \mathbb{S}_{L + \kappa}(\gamma) 
    = \inf_{\eta \in \mathcal{P}(p, q)} \mathbb{S}_{L + \kappa}(\eta).
\end{equation*}

\begin{thm}\label{thm:existence of global minimizers}
Let $L : TG \to \RR$ be a Tonelli Lagrangian on $(G,\G)$. 
Assume that \( G \) is \( L^2 \)-regular and that, for all \( p \in G \), the set
\begin{equation*}
    A_{p} := \Bigl\{ \xi \in L^2([0,1], T_e G) \,\Big|\, \operatorname{evol}(\xi) = p \Bigr\}
\end{equation*}
is weakly closed. 
(We refer to \Cref{def:regular half Lie group} for the notions of $L^2$-regularity and the evolution map~$\operatorname{evol}$.)
Then, for every pair of points \( p, q \in G \) and every \( \kappa > c(L) \), there exists a global minimizer of \( \mathbb{S}_{L + \kappa} \) in \( \mathcal{P}(p, q) \).
\end{thm}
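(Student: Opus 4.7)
I propose to use the direct method of the calculus of variations after rephrasing the problem on the Hilbert space $L^2([0,1], T_e G)$ via right-invariance and the evolution map.

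\textbf{Step 1 (reduction and reformulation).} Since $\mathcal{G}$ and $L$ are $G$-right-invariant, right translation by $p^{-1}$ gives an action-preserving bijection $\mathcal{P}(p,q) \to \mathcal{P}(e, q')$ with $q' := qp^{-1}$; hence I may assume $p = e$, and (the case $q' = e$ being handled by the constant loop) that $q' \neq e$. For $\gamma \in \mathcal{P}(e, q')$ with $\gamma : [0,T] \to G$, let $\eta_\gamma(t) := dR_{\gamma(t)^{-1}} \dot\gamma(t) \in T_eG$ be its right-logarithmic derivative; after reparametrizing to $[0,1]$, the pair $(T, \xi)$ with $\xi(s) := T\,\eta_\gamma(Ts) \in L^2([0,1], T_eG)$ runs precisely over $(0,\infty) \times A_{q'}$ (using $L^2$-regularity), and right-invariance of $L$ recasts the action as
\[
J(T,\xi) \,:=\, T \int_0^1 L\!\left(e,\, \xi(s)/T\right) ds + \kappa T \, .
\]
This $J$ is the perspective of the convex map $v \mapsto L(e,v) + \kappa$ and is therefore jointly convex on $(0,\infty) \times L^2([0,1], T_eG)$.

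\textbf{Step 2 (a priori bounds on a minimizing sequence).} Let $m := \inf J$ on $(0,\infty) \times A_{q'}$; any competitor from $\mathcal{P}(e,q')$ gives $m < \infty$, and $\kappa > c(L)$ yields $m > -\infty$. Take a minimizing sequence $(T_n, \xi_n)$. The Tonelli lower bound $L(e, v) \geq c_1\|v\|_{\mathcal{G}}^2 - c_2$ gives
\[
J(T_n, \xi_n) \,\geq\, \tfrac{c_1}{T_n}\|\xi_n\|_{L^2}^2 + (\kappa - c_2)\, T_n \, .
\]
To bound $T_n$ from above, fix any $\eta \in H^1([0, T_0], G)$ from $q'$ to $e$ and concatenate it with $\gamma_n$ to form a closed loop $\sigma_n$ of period $T_n + T_0$ at $e$. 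By the definition of $c(L)$ one has $\mathbb{S}_{L+\kappa}(\sigma_n) \geq (\kappa - c(L))(T_n + T_0)$, whereas $\mathbb{S}_{L+\kappa}(\sigma_n) = J(T_n, \xi_n) + \mathbb{S}_{L+\kappa}(\eta) \leq m + 1 + \mathbb{S}_{L+\kappa}(\eta)$ for $n$ large, so $T_n$ is uniformly bounded because $\kappa - c(L) > 0$. To exclude $T_n \to 0$, the displayed inequality would force $\|\xi_n\|_{L^2} \to 0$; then continuity of $\operatorname{evol}: L^2 \to G$ (a consequence of $L^2$-regularity) gives $\operatorname{evol}(\xi_n) \to e$, contradicting $\operatorname{evol}(\xi_n) = q' \neq e$. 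Hence $T_n \in [\tau, T^*]$ for some $0 < \tau \leq T^* < \infty$, and consequently $\|\xi_n\|_{L^2}$ is uniformly bounded.

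\textbf{Step 3 (limit passage).} Up to a subsequence, $T_n \to T_\infty \in [\tau, T^*]$ and $\xi_n \rightharpoonup \xi_\infty$ weakly in $L^2([0,1], T_e G)$. The hypothesis that $A_{q'}$ is weakly closed then yields $\xi_\infty \in A_{q'}$. With $\zeta_n := \xi_n/T_n$ and $\zeta_\infty := \xi_\infty/T_\infty$, strong convergence of $T_n$ to $T_\infty > 0$ gives $\zeta_n \rightharpoonup \zeta_\infty$ weakly in $L^2$. Convexity of $v \mapsto L(e,v)$ together with the quadratic lower bound ensures (by an Ioffe–Serrin type lower semicontinuity result) that $\zeta \mapsto \int_0^1 L(e,\zeta(s))\,ds$ is weakly lower semicontinuous on $L^2$, so
\[
J(T_\infty, \xi_\infty) \,=\, T_\infty\!\int_0^1 L(e,\zeta_\infty)\,ds + \kappa T_\infty \,\leq\, \liminf_n \Bigl( T_n\!\int_0^1 L(e,\zeta_n)\,ds + \kappa T_n \Bigr) \,=\, m \, .
\]
Therefore $(T_\infty, \xi_\infty)$ is a minimizer of $J$, and the associated path in $\mathcal{P}(e,q')$ transferred back by right translation is the desired global minimizer in $\mathcal{P}(p, q)$.

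\textbf{Main obstacle.} The essential use of the assumption $\kappa > c(L)$ sits in the upper a priori bound on $T_n$ via the closed-loop Mañé inequality; everything else rests on standard ingredients, namely the weak closedness of $A_{q'}$ (hypothesis), continuity of $\operatorname{evol}$ (from $L^2$-regularity), and convexity plus quadratic growth of $L$ (from the Tonelli condition). I expect the main subtlety to lie in making the concatenation and continuity arguments airtight in the half-Lie group context, where left translations are merely continuous and one must carefully track which regularity statements survive for the functional analytic arguments to go through.
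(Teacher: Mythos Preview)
Your proposal is correct and follows essentially the same route as the paper: reformulate on $L^2([0,1],T_eG)\times(0,\infty)$ via right-invariance and the evolution map, obtain two-sided bounds on the period (the upper one via the closed-loop Mañé inequality), extract a weakly convergent subsequence using the weak-closedness hypothesis on $A_{q'}$, and conclude by weak lower semicontinuity of the convex functional $S_{L+\kappa}$. The only noteworthy difference is in excluding $T_n\to 0$: the paper uses nondegeneracy of the geodesic distance (so $\|\xi_n\|_{L^2}\ge\|\xi_n\|_{L^1}\ge d_{\mathcal G}(e,q')>0$), whereas you invoke continuity of $\operatorname{evol}:L^2\to G$, which is standard but not literally part of the paper's minimal Definition of $L^2$-regularity; the paper's variant is therefore slightly more self-contained.
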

\begin{rem}[An infinite-dimensional Tonelli-type theorem]
    \Cref{thm:existence of global minimizers} can be regarded as an infinite-dimensional analogue of the classical Tonelli theorem. 
    In particular, it extends \cite[Thm.~3.1.1]{ContrerasIturriaga1999} from the finite-dimensional to this infinite-dimensional setting.
\end{rem}
\begin{rem}
    \Cref{thm:existence of global minimizers} does not require \( \mathbb{S}_{L+\kappa} \) to be Fréchet differentiable.
\end{rem}

In the case where the action functional \( \mathbb{S}_{L+\kappa} \) is continuously differentiable, by \Cref{lemma:ELE anc solutions} the critical points of \( \mathbb S_{L + \kappa} \) are precisely the flow lines of the Euler--Lagrange flow of \( L \) with prescribed energy \( \kappa \).
Hence, the global minimizers obtained in \Cref{thm:existence of global minimizers} are solutions of the Euler--Lagrange equations with energy \( \kappa \). 
We can therefore conclude that the Euler–Lagrange flow is \emph{convex} for energy values above Mañé’s critical value, in the sense that any two points in \( G \) can be connected by a solution of the Euler–Lagrange equations with prescribed energy. This in particular applies to strong Tonelli Lagrangians, as the strong Tonelli property implies that \( \mathbb S_{L + \kappa} \) is \( C^1 \). We summarize this observation in the following theorem.

\begin{thm}\label{thm: Hopf-Rinow for Tonelli int} 
Let $L : TG \to \RR$ be a strong Tonelli Lagrangian on $(G,\G)$.
Under the same assumptions of \Cref{thm:existence of global minimizers}, for all energy levels $\kappa > c(L)$ and all $p,q \in G$ there exists a solution of the Euler--Lagrange equations of $L$ with energy $\kappa$ that connects \( p \) and \( q \) and minimizes the action $\mathbb{S}_{L+\kappa}$ over \( \mathcal P(p, q) \).

\end{thm}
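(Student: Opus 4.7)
The plan is to reduce the statement to a direct combination of \Cref{thm:existence of global minimizers} and the variational characterization of Euler--Lagrange solutions provided by \Cref{lemma:ELE anc solutions}, with the strong Tonelli hypothesis entering only to ensure the requisite regularity of the action functional.

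First, since $L$ is in particular a Tonelli Lagrangian on $(G,\G)$ and the half-Lie group $G$ satisfies the hypotheses of \Cref{thm:existence of global minimizers}, I would invoke that theorem to produce, for any $p,q \in G$ and any $\kappa > c(L)$, a path $\gamma \in \mathcal{P}(p,q)$ that is a global minimizer of $\mathbb{S}_{L+\kappa}$. This already settles the existence and minimizing parts of the conclusion; the remaining task is to promote $\gamma$ from a mere minimizer of the time-free action to an actual flow line of the Euler--Lagrange flow at energy $\kappa$.

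Next, I would use the \emph{strong} part of the Tonelli hypothesis. As recorded in the paragraph preceding the theorem, the strong Tonelli property---which requires $\partial_x L$ to grow at most quadratically in $v$ in a neighborhood of the identity---is precisely what guarantees that $\mathbb{S}_{L+\kappa}$ is continuously Fréchet differentiable on $H^1([0,T],G)$ for every $T>0$. Under this $C^1$-regularity, any local minimizer of $\mathbb{S}_{L+\kappa}$ is a critical point, and hence so is our global minimizer $\gamma$. Applying \Cref{lemma:ELE anc solutions}, which identifies critical points of $\mathbb{S}_{L+\kappa}$ with Euler--Lagrange flow lines of energy exactly $\kappa$, yields that $\gamma$ solves the Euler--Lagrange equations \eqref{eq: ELE} with $E(\gamma,\dot\gamma)\equiv \kappa$.

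The main obstacle, and essentially the only nontrivial content beyond what has already been established, is the verification that the strong Tonelli assumption indeed implies $C^1$-regularity of $\mathbb{S}_{L+\kappa}$ on $\mathcal{P}(p,q)$, so that the passage from minimizer to critical point and then to classical solution via \Cref{lemma:ELE anc solutions} is legitimate. Once that regularity---established in the body of the paper under the strong Tonelli hypothesis---is in hand, the argument is a direct concatenation: \Cref{thm:existence of global minimizers} supplies the minimizer, the $C^1$ regularity of $\mathbb{S}_{L+\kappa}$ promotes it to a critical point, and \Cref{lemma:ELE anc solutions} upgrades it to an Euler--Lagrange solution of energy $\kappa$ connecting $p$ and $q$, which is what the theorem asserts.
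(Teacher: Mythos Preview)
Your proposal is correct and follows essentially the same route as the paper: invoke \Cref{thm:existence of global minimizers} to obtain a global minimizer, use the strong Tonelli hypothesis (via \Cref{lem: strong tonelli implies action is c1}) to get $C^1$-regularity of $\mathbb{S}_{L+\kappa}$ on $H^1_{p,q}\times(0,\infty)$, deduce that the minimizer is a critical point, and then apply \Cref{lemma:ELE anc solutions}. The paper's proof adds only the brief remark that $H^1_{p,q}$ is a smooth submanifold of $H^1([0,1],G)$ (because the endpoint evaluation is a submersion), which is what makes the passage from minimizer to critical point of the restricted functional legitimate; you may wish to state this explicitly.
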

\begin{rem}[From finite to infinite dimensions]
    To the best of the author's knowledge, \Cref{thm: Hopf-Rinow for Tonelli int} 
    is the first result of this kind for Tonelli Lagrangians on 
    infinite-dimensional manifolds. 
    Also, one can think of \Cref{thm: Hopf-Rinow for Tonelli int} as an extension of the results in~\cite[Cor.~B]{Co06} 
    from closed finite-dimensional manifolds to the infinite-dimensional setting.
\end{rem}

\begin{rem}[Kinetic case]
    The kinetic Lagrangian (cf.\ \cref{rem: intro magnetic Langrang are Tonelli int} with \( \theta = 0\) and \( V = 0\)) is Tonelli. Thus, \Cref{thm: Hopf-Rinow for Tonelli int} extends statements~(d) and~(e) of \cite[Thm.~7.7]{Bauer_2025} to our setting.
\end{rem}
\begin{rem}
    If \( \kappa \le c(L) \), there may exist pairs of points that cannot be connected by 
    a flow line of the Euler--Lagrange flow with energy \( \kappa \); see~\cite{M24}. 
    This phenomenon already occurs in the finite-dimensional case, for instance on 
    \( S^3 = SU(2) \); see ~\cite{ALBERS2025105521}. 
    However, even for finite-dimensional closed manifolds, there is in general no 
    satisfactory understanding of which pairs of points fail to be connected for a 
    given subcritical energy.
\end{rem}
Before moving on, we point out that the proof of \Cref{thm: Hopf-Rinow for Tonelli int} carries over, with minor modifications, to all Tonelli Lagrangians for which \( \mathbb{S}_{L+\kappa} \) is \( C^1 \). 
It would be interesting to determine whether there exist classes of Tonelli Lagrangians beyond the strongly Tonelli ones for which this property still holds. 

We close this subsection by mentioning that, if the universal cover \( \pi \colon \hat{G} \to G \) satisfies an analogous condition to that of \( G \) in \Cref{thm: Hopf-Rinow for Tonelli int}, 
then the energy threshold appearing therein can be lowered to the lowest Mañé critical value. 

\begin{cor}\label{cor: hopf rinow above lowest manes critcal value}
    Let $L : TG \to \RR$ be a strong Tonelli Lagrangian on $(G, \G)$. 
    Suppose that for its universal cover $\pi: \hat G \rightarrow G$, equipped with the pullback metric $\hat \G$, 
    it holds that $\hat G$ is $L^2$–regular and that for each $\hat p \in \hat G$ the sets
    \[
    A_{\hat p} := \Bigl\{ \xi \in L^2([0,1], T_e \hat G) \;\big|\; \mathrm{evol}(\xi) = \hat p \Bigr\} 
    \subset L^2([0,1], T_e \hat G)
    \]
    are weakly closed. 
    Then, for all energy levels $\kappa > c_u(L)$ and all $p, q \in G$, 
   there exists a solution of the Euler--Lagrange equations of $L$ with energy $\kappa$ that connects \( p \) and \( q \) and minimizes the action $\mathbb{S}_{L+\kappa}$ over \( \mathcal P(p, q) \).
\end{cor}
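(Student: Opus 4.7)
The strategy is to lift the entire problem to the universal cover, apply \Cref{thm: Hopf-Rinow for Tonelli int} there, and then project the resulting minimizer back down to \(G\).

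First I would equip \(\hat G\) with the half-Lie group structure induced from \(G\). Since \(\pi \colon \hat G \to G\) is a local diffeomorphism intertwining the right translations, \(\hat \G = \pi^* \G\) is a strong, right-invariant Riemannian metric on \(\hat G\). Define \(\hat L := L \circ \d \pi \colon T\hat G \to \RR\). Because \(\d\pi\) is a fiberwise linear isometry with respect to \(\hat \G\) and \(\G\), the lift \(\hat L\) inherits right-invariance, uniform convexity, quadratic fiber growth, and, at the identity, the strong Tonelli derivative bound; hence \(\hat L\) is a strong Tonelli Lagrangian on \((\hat G, \hat \G)\). By construction \(\pi\) conjugates the Euler--Lagrange flow of \(\hat L\) to that of \(L\) and preserves the energy.

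The key identification is \(c(\hat L) = c_u(L)\). Since \(\hat G\) is simply connected, every closed curve \(\hat \gamma\) in \(\hat G\) projects to a contractible closed curve \(\pi \circ \hat \gamma\) in \(G\) with identical mean Lagrangian action \eqref{eq: mean lagrangian action}. Conversely, any contractible closed curve in \(G\) lifts to a closed curve in \(\hat G\) with the same mean action. Taking the infimum of \eqref{eq: mean lagrangian action} over the respective classes of closed curves thus yields \(c(\hat L) = c_u(L)\).

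Now fix \(\kappa > c_u(L) = c(\hat L)\), points \(p, q \in G\), and a lift \(\hat p \in \pi^{-1}(p)\). For each \(\hat q \in \pi^{-1}(q)\), the hypotheses on \(\hat G\) allow one to apply \Cref{thm: Hopf-Rinow for Tonelli int} to \(\hat L\) on \((\hat G, \hat \G)\), producing a flow line \(\hat \gamma_{\hat q}\) of the Euler--Lagrange flow of \(\hat L\) with energy \(\kappa\) that minimizes \(\SS_{\hat L + \kappa}\) over \(\mathcal P(\hat p, \hat q)\). Since every path in \(\mathcal P(p,q)\) lifts uniquely to a path from \(\hat p\) to some lift of \(q\) with identical action,
\[
\inf_{\mathcal P(p,q)} \SS_{L+\kappa} \;=\; \inf_{\hat q \in \pi^{-1}(q)} \SS_{\hat L + \kappa}(\hat \gamma_{\hat q}).
\]
To conclude, I would show the right-hand infimum is attained: picking a minimizing sequence of lifts \(\hat q_n \in \pi^{-1}(q)\) and the corresponding minimizers \(\hat \gamma_{\hat q_n}\), the direct method from the proof of \Cref{thm:existence of global minimizers} — using \(L^2\)-regularity of \(\hat G\) and weak closedness of the sets \(A_{\hat p}\) — furnishes a weakly convergent subsequence. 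Since \(\pi^{-1}(q)\) is closed in \(\hat G\), the weak limit ends at some \(\hat q_\infty \in \pi^{-1}(q)\); projecting this limit via \(\pi\) produces the desired flow line in \(G\).

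The main obstacle is this final compactness step, namely the selection of the optimal lift of \(q\). One must show that the sequence of actions \(\SS_{\hat L + \kappa}(\hat \gamma_{\hat q_n})\) remains bounded and that the weak limit of the associated right-logarithmic derivatives terminates at a point of \(\pi^{-1}(q)\). The boundedness follows from \(\kappa > c_u(L) = c(\hat L)\) via finiteness of the Mañé action potential on \(\hat G\), while preservation of the endpoint under weak limits rests on the weak closedness assumption together with the discreteness of the fiber \(\pi^{-1}(q) \subset \hat G\), mirroring the covering-space argument used in Contreras' finite-dimensional proof.
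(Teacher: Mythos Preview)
Your lifting strategy, the identification $c(\hat L)=c_u(L)$ via the bijection between loops in $\hat G$ and contractible loops in $G$, and the application of \Cref{thm: Hopf-Rinow for Tonelli int} on $(\hat G,\hat\G)$ are exactly the paper's argument. The paper is extremely terse: it records that $c(\hat L)=c_u(\hat L)$ on the simply connected cover, applies \Cref{thm: Hopf-Rinow for Tonelli int} to $\hat L$, and declares the proof finished, without ever discussing the choice of lift $\hat q$.

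You go further than the paper by isolating lift selection as ``the main obstacle,'' and you are right that it is one: a minimizer in $\mathcal P(\hat p,\hat q)$ for a fixed $\hat q$ projects only to a minimizer within one homotopy class of $\mathcal P(p,q)$, not a global one. Your proposed fix, however, has a gap. The hypothesis is that each \emph{single} set $A_{\hat r}=\operatorname{evol}^{-1}(\hat r)$ is weakly closed, whereas the right-logarithmic derivatives $\xi_n$ of your sequence lie in the \emph{varying} sets $A_{\hat q_n\hat p^{-1}}$; to conclude that the weak limit satisfies $\operatorname{evol}(\xi_\infty)\in\pi^{-1}(q)\cdot\hat p^{-1}$ you would need $\bigcup_{\hat q\in\pi^{-1}(q)}A_{\hat q\hat p^{-1}}$ to be weakly closed, which is not assumed. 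Discreteness of $\pi^{-1}(q)$ in the strong topology of $\hat G$ does not yield any weak continuity of $\operatorname{evol}$, so it does not help here. A more promising route is to show that only finitely many distinct $\hat q_n$ can occur along a minimizing sequence and then pass to a subsequence with constant endpoint; but the period bound underlying \Cref{lem: period bounded from above} relies on concatenating with a \emph{fixed} return path, which is unavailable when the terminal points $\hat q_n$ roam over the whole fiber, so this step requires a genuinely new argument.
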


\begin{proof}
The proof goes as follows. 
First, one observes that the lifted Tonelli Lagrangian $\hat{L} := L \circ \pi$ is an $\hat{G}$–right invariant Tonelli Lagrangian on $(\hat{G}, \hat{\G})$. 
This follows from the fact that $\pi$ is a topological group homomorphism and a local isometry, 
and that uniform convexity is a local condition. 
Next, we observe that $\pi$ maps Euler--Lagrange flow lines of $\hat{L}$ with energy $\kappa$ to flow lines of $L$ with the same energy $\kappa$. 
By the definition of $c_u(\hat{L})$ and $c(\hat{L})$ in \eqref{eq: mean lagrangian action} or \Cref{Def: rightv inv Tonelli Langrangian}, on simply connected spaces we have the equality $c(\hat{L}) = c_u(\hat{L})$. 
This discussion, in combination with the application of \Cref{thm: Hopf-Rinow for Tonelli int} 
to $\hat{L}$ on $(\hat{G}, \hat{\G})$, finishes the proof.
\end{proof}

\begin{rem}
   \Cref{cor: hopf rinow above lowest manes critcal value} is an extension of the results in~\cite[Cor.~B]{Co06} 
    from closed finite-dimensional manifolds to the infinite-dimensional setting.
\end{rem}
\begin{rem}[Relation to \cite{HopfRinowHalfLiegroups}]
  In the case of a magnetic Lagrangian, i.e.\ a Lagrangian of the form described in 
\Cref{rem: intro magnetic Langrang are Tonelli int} with \( V = 0 \), this result 
generalizes the recent results (4) and (5) of the authors in 
\cite[Thm.~1.6]{HopfRinowHalfLiegroups}, since \( c_u \) is in general less than or equal to 
the geometric version of Mañé’s critical value defined therein. \\ 
In the case of a non-exact magnetic system, there may or may not be a relation between 
\Cref{cor: hopf rinow above lowest manes critcal value} and 
\cite[Thm.~1.6]{HopfRinowHalfLiegroups}. 
\end{rem}

\subsection{First illustrations of \Cref{thm: Hopf-Rinow for Tonelli int}} In the following, we illustrate \Cref{thm: Hopf-Rinow for Tonelli int} 
first through a purely geometric application to groups of Sobolev diffeomorphisms, 
and second through an application to geometric hydrodynamics via the so-called 
\emph{magnetic Euler–Arnold equations} and \emph{Euler–Poincaré equations}.\medskip

\textbf{Euler–Lagrange flow of Tonelli Lagrangians on Sobolev diffeomorphism groups.}
Let \( (M, g) \) be a compact, finite-dimensional Riemannian manifold. 
We consider the group of Sobolev diffeomorphisms \( \mathrm{Diff}^{H^s}(M) \) 
of Sobolev order \( s > \frac{\dim M}{2} + 1 \). For more details, we refer to \Cref{ex: sobolov lie group}. 
We equip this infinite-dimensional half-Lie group with the strong, right-invariant Sobolev metric of order~\( s \), defined by
\begin{equation*}
    \mathcal{G}^s_{\varphi}(h \circ \varphi,\, k \circ \varphi) 
    = \int_M g\big( (1 - \Delta)^{s/2} h,\; (1 - \Delta)^{s/2} k \big)\, \mathrm{dvol}_g, 
    \qquad \forall\, \varphi \in \mathrm{Diff}^{H^s}(M),
\end{equation*}
where \( h \) and \( k \) are \( H^s \)-vector fields on \( M \), and where 
\( \Delta \) denotes the Laplacian with respect to the Riemannian volume form \( \mathrm{dvol}_g \) associated with the Riemannian metric \( g \). 

Using the results of~\cite{EM70,BauerBruverisCismasEscherKolev2020,BauerEscherKolev2015}, 
one sees that \( \mathcal{G}^s \) is indeed a strong Riemannian metric on \( \mathrm{Diff}^{H^s}(M) \). 
For non-integer values of \( s \), this relies on highly non-trivial analytic estimates, 
which have been established only recently~\cite{BauerBruverisCismasEscherKolev2020,BauerEscherKolev2015}. 

Thus, we conclude from \Cref{prop: global existince of flow lines} that for every Tonelli Lagrangian $L$ on $ \big(\mathrm{Diff}^{H^s}(M), \mathcal{G}^s \big)$, 
its associated Euler–Lagrange flow (as well as the associated dual Hamiltonian flow) is maximally defined on all of $\RR$. Note that in order to deduce this one first has to verify the assumptions in \Cref{thm:existence of global minimizers} on the base \( \mathrm{Diff}^{H^s}(M) \).
These were implicitly shown in~\cite{BruverisVialard2017} for \( \mathrm{Diff}^{H^s}(M) \) as well as for its universal cover.
Thus, for all strong Tonelli Lagrangians on
\( (\mathrm{Diff}^{H^s}(M), \mathcal{G}^s)\), 
convexity of the Euler-Langrange flow holds for energies above the lowest Mañé’s critical value $c_u(L)$.  

By choosing the kinetic Lagrangian \( L = \tfrac{1}{2} \|v \|_{\mathcal{G}}^2 \), whose associated Euler–Lagrange flow is precisely the geodesic flow, 
our result reduce to the completeness results for the group of Sobolev diffeomorphisms 
as obtained in~\cite{BruverisVialard2017}. 
\medskip

\textbf{Euler–Poincaré equations and magnetic Euler–Arnold equations.} 
We close this subsection with an application of \Cref{thm: Hopf-Rinow for Tonelli int} 
to geometric hydrodynamics, in the context of the \emph{Euler–Poincaré equations} introduced by Holm–Marsden–Ratiu in \cite{HolmMardsenRatiuEulerPoincare}, 
or more specifically in the context of the \emph{magnetic Euler–Arnold equation} 
recently introduced by the first author in~\cite{maier2025geometrichydrodynamicsinfinitedimensional}. 
We know that for the restriction of the Lagrangian system 
considered above to the group of Sobolev diffeomorphisms, restricted to its \( C^k \) 
and, in particular, \( C^1 \) elements, the completeness statement for the Euler--Lagrange flow in \Cref{prop: global existince of flow lines} holds. 
That is, the Euler–Lagrange flow of the system consisting of a Tonelli Lagrangian on 
\begin{equation}\label{eq: magn sysetm on sobolev lie groups}
\big( \mathrm{Diff}^{H^{s+1}}(M),\, \mathcal{G}^s\big)
\end{equation}
exists globally, where we have used that the \( C^1 \)-elements in \( \mathrm{Diff}^{H^{s}}(M) \) are precisely \( \mathrm{Diff}^{H^{s+1}}(M) \).

Using \cite[Lemma 7.4]{Bauer_2025}, the adjoint $\mathrm{ad}^T$ of \( \mathrm{ad} \) exists, thus by the following well-known identity (see for example \cite{AK98,Vi08} or \cite[eq. (2.8)]{maier2025geometrichydrodynamicsinfinitedimensional}) \[
\operatorname{ad}^*_u (A v) = -A \big( \operatorname{ad}^\top_u (v) \big) \quad \forall\, u, v \in T_e \mathrm{Diff}^{H^{s+1}}(M).
\]
$\operatorname{ad}^*$ also exists. Here, $A$ is given on $(\mathrm{Diff}^{H^{s}}(M), \G^s)$ by \( A_s = (1 - \Delta_g)^s \) and it is usually referred to in the literature as the \textit{inertia operator}. By using~\cite[Thm. 1.2]{HolmMardsenRatiuEulerPoincare} the curve \( \varphi \) is a flow line of the Euler--Lagrange flow of $L$ on the space in ~\eqref{eq: magn sysetm on sobolev lie groups} if and only if \( u:= \dot{\varphi}\circ\varphi^{-1} \) is a solution of the basic Euler--Poincaré equations:
\begin{equation}\label{eq: EP eq}
    \frac{\d}{\d t}\frac{\delta \ell}{\delta u} = \operatorname{ad}^*_{\delta u}\frac{\delta \ell}{\delta u} \tag{EP},
\end{equation}
where $\ell(\cdot)=L(e,\cdot)$ and $\frac{\delta \ell}{\delta u}\in T^*_e \mathrm{Diff}^{H^{s+1}}(M)$ denotes the functional derivative. For a precise definition we refer to \cite{HolmMardsenRatiuEulerPoincare}.  By using this duality in combination with \Cref{prop: global existince of flow lines} we obtain global well-posedness for \eqref{eq: EP eq} for any Tonelli Lagrangian on  the space~\eqref{eq: magn sysetm on sobolev lie groups}.\\

This can be made more precise for right-invariant electromagnetic Lagrangians on the space.
That is, by using the results of \cite{Khesin-Mis-Mod-inf-Newton} in combination with \cite[Cor.~2.9, Thm.~2.10]{maier2025geometrichydrodynamicsinfinitedimensional}, 
we obtain that a curve \( \varphi \) is a flow line of the Euler--Lagrange flow of a right-invariant electromagnetic Lagrangian (cf.\ \eqref{eq: electromagnetic lagrangians int}) if and only if \( u := \dot{\varphi}\circ\varphi^{-1} \) is a solution of the following partial differential equation:
\begin{equation}\label{eq: MEpDiff}
    m_t + \nabla_u m + (\nabla u)^{\!\top} m + (\operatorname{div} u)\, m 
    = -\, A_s\big(Y_{\id}(u)\big) - A_s(\nabla V(u)), 
    \tag{ELMEpDiff}
\end{equation}
where \( Y \) is the Lorentz force defined by 
\[
\G^s_x(Y_x(u), v) = \d\theta_x(u, v) \quad \forall x \in G, \ \forall u, v \in T_x G,
\]
\( m = A_s u \) is the \emph{momentum density} 
and \( \nabla \) is the Levi–Civita connection of \( (M, g) \).\\

We call~\eqref{eq: MEpDiff} the \emph{electromagnetic EPDiff equation}, 
which in the case of a vanishing potential is precisely a special case of the \emph{magnetic EPDiff equation} introduced by the authors in \cite{HopfRinowHalfLiegroups}. 
In the case of a vanishing magnetic field \( \d\theta = 0 \) and vanishing potential \( V = 0 \), 
we recover the classical EPDiff equation, the geodesic equation of \( \big( \mathrm{Diff}^{H^s}(M), \G^s \big) \).

As these electromagnetic Lagrangians are all Tonelli by the discussion above,  
we obtain global well-posedness for~\eqref{eq: MEpDiff} on the space of vector fields \( u \) of Sobolev class $s$ at least \(\tfrac{\dim M}{2} + 1 \).
\subsection{Outline of the paper.}
In \Cref{s:preliminaries}, we recall the notion of half-Lie groups and their \( C^k \)-elements, 
as well as strong Riemannian metrics.
In \Cref{s:Tonelli Lagrangians on half Lie-Groups}, we provide all details outlined in the previous paragraph 
on right-invariant Tonelli Lagrangians on half-Lie groups. 
In particular, we present background on the time-free action functional. 
Moreover, we show that the no-loss–no-gain result of~\cite{Bauer_2025} 
extends to flow lines of the Euler–Lagrange flow of a Tonelli Lagrangian, that is the Euler--Lagrange flow does not alter regularity.
In \Cref{s: Mañé's critical values for Tonelli Lagrangians.}, we provide details on the Mañé critical values.
Finally, in \Cref{s: The Hopf–Rinow theorem for Tonelli Lagrangians.}, 
we present the proofs of \Cref{prop: global existince of flow lines},\Cref{thm:existence of global minimizers}, and \Cref{thm: Hopf-Rinow for Tonelli int}.
\\

\paragraph{\textbf{Acknowledgments.}} The authors thank P. Albers and all participants of the symplectic research seminar in Heidelberg for their valuable feedback on earlier versions of this article.
L.M.\ thanks A.~Abbondandolo, G.~Benedetti, B.~Khesin, and L.~Assele for many helpful discussions on Hamiltonian systems.
L.M.\ would also like to thank M.~Bauer and P.~Michor for insightful discussions about their work~\cite{Bauer_2025}, as well as the participants of \emph{Math en plein air 2025} and \emph{Dynamische Systeme MFO 2025} for valuable feedback on earlier versions of this work.\\
The authors acknowledge funding by the Deutsche Forschungsgemeinschaft (DFG, German Research Foundation) – 281869850 (RTG 2229), 390900948 (EXC-2181/1), and 281071066 (TRR 191). L.M.\ gratefully acknowledges the excellent working conditions and stimulating interactions at the Erwin Schrödinger International Institute for Mathematics and Physics in Vienna during the thematic programme \emph{``Infinite-dimensional Geometry: Theory and Applications''}, where part of this work was carried out. \\
Finally, L.M. thanks F. Schlenk for his warm hospitality at the University of Neuchâtel in October 2025, during which part of this work was completed.
\section{Preliminaries}\label{s:preliminaries}
We begin by fixing some notation. For a group $G$ we denote by $\mu \colon G \times G \to G$ the group multiplication and by $\mu_x, \mu^y$ left and right translations respectively:
\[
\mu(x, y) = \mu_x(y) = \mu^y(x).
\]

\subsection{Half-Lie groups}\label{ss: Half Lie grous}
Let us give a precise definition of half-Lie group and discuss a few examples. We follow the presentation in \cite{Bauer_2025}.
\begin{defn}
    A \textit{right (resp.\ left) half-Lie group} is a topological group endowed with a smooth structure such that right (resp.\ left) multiplication by any element is smooth.
\end{defn}
\begin{rem}
    Half-Lie groups can be modeled on various different types of spaces. In this work we stick to Banach (and Hilbert) half-Lie groups.
\end{rem} 
The main examples include groups of $H^s$- or $C^k$-diffeomorphisms (see~\cite{EM70}), 
as well as semidirect products of a Lie group with kernel an infinite-dimensional representation space. 
Since the first case will serve as our guiding example, we now describe it in more detail.

\begin{example}\label{ex: sobolov lie group}
If $(M, g)$ is a finite-dimensional compact Riemannian manifold or an open Riemannian manifold of bounded geometry, 
then the diffeomorphism group $\mathrm{Diff}^{H^s}(M)$ of Sobolev regularity $s > \dim(M)/2 + 1$ is a half-Lie group. 
Likewise, the groups  $\mathrm{Diff}^{C^k}(M)$ for $1 \leq k < \infty$ 
are half-Lie groups. However, they are not Lie groups because left multiplication is non-smooth.
For a thorough explanation, we refer to~\cite{EM70}.
\end{example}

While left multiplication is only required to be continuous, some elements display better regularity. This is captured by the following definition.
\begin{defn}\label{Def: ck elements}
    An element \( x \in G \) of a Banach half-Lie group is of class \( C^k \) if the left translations \( \mu_x, \mu_x^{-1} \colon G \to G \) are \( k \) times continuously Fréchet differentiable. We denote by \( G^k \) the set of \( C^k \) elements.
\end{defn}

\begin{example}
\label{Ex:Diff_differentiable_elements}
Let $M$ be a closed manifold. Then by~\cite[Ex. 3.5]{Bauer_2025} it holds that
\[
\big(\mathrm{Diff}^{C^k}(M)\big)^{(\ell)} = \mathrm{Diff}^{C^{k+\ell}}(M).
\]
\end{example}
For further details on $C^k$-elements of half-Lie groups, we refer the reader to \cite[§§2–3]{Bauer_2025}. 

We now recall the notion of regular half-Lie groups in the setting where the half-Lie group carries a Banach manifold structure.
\begin{defn}[Regular half-Lie groups]\label{def:regular half Lie group}
Let $G$ be a Banach half-Lie group, and let $F$ be a subset of  $L^1_{\mathrm{loc}}(\RR, T_e G)$.  
We say that $G$ is \emph{$F$-regular} if for every $X \in F$ there exists a unique solution $ g \in W^{1,1}_{\mathrm{loc}}(\RR, G)$ of the differential equation
\[
    \partial_t g(t) \;=\; T_e \mu_{g(t)}\, X(t), 
    \qquad g(0) = e.
\]
This solution is called the \emph{evolution} of $X$ and is denoted by $\mathrm{Evol}(X)$. 
Its evaluation at $t=1$ is denoted by $\mathrm{evol}(X)$.
\end{defn}
\subsection{Riemannian metrics on half-Lie groups}
Following \cite{Bauer_2025}, we recall the notion of strong right-invariant Riemannian metrics.
\begin{defn}\label{Def: strong right invariant Riemannian metrics}
A Riemannian metric \( \mathcal{G} \) on a half-Lie group \( G \) is called 
\emph{$G$-right-invariant} if
\[
  \mathcal{G}_x\big( T_e \mu^x v,\, T_e \mu^x w \big)
  = \mathcal{G}_e(v, w),
  \quad \forall\, x \in G,\, v, w \in T_e G .
\]
The Riemannian metric \( \mathcal{G} \) is called \emph{strong} if 
\( \mathcal{G}_x \) induces the manifold topology on \( T_x G \) for every \( x \in G \); it is called \emph{weak} otherwise.
\end{defn}

Kriegl and Michor \cite{KrieglMichor1997} define \textit{convenient manifolds} as manifolds modeled on convenient vector spaces, i.e. Mackey complete locally convex spaces. All Banach and Fréchet spaces are convenient. Strong Riemannian metrics on such manifolds admit the following useful characterization.
\begin{thm}[{\cite[Theorem 7.2]{Bauer_2025}}]
    Let \( M\) be a convenient manifold equipped with a Riemannian metric $\G$. Then, \( \G \) is a strong Riemannian metric if and only if $M$ is a Hilbert manifold and the canonical map
    \begin{equation*}
\mathcal{G}^\vee \colon TM \longrightarrow T^*M, 
\quad (x, v) \mapsto \mathcal{G}_x(v, \cdot) \,  ,
\end{equation*}
    is a bundle isomorphism.
\end{thm}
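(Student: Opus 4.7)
The plan is to prove the two implications of the equivalence separately, with the Riesz representation theorem serving as the pivot in both. For the ($\Rightarrow$) direction, I would start from the hypothesis that $\mathcal{G}_x$ induces the manifold topology on every $T_xM$. Since tangent spaces of convenient manifolds are complete in the appropriate sense, a compatible inner product upgrades them to Hilbert spaces; hence the model space is Hilbertian and $M$ is a Hilbert manifold. Fiberwise, the Riesz representation theorem then identifies $\mathcal{G}^\vee_x$ with the Riesz isomorphism $T_xM \xrightarrow{\sim} T_x^*M$. To promote this to a smooth bundle isomorphism, smoothness of $\mathcal{G}$ as a section of $\mathrm{Sym}^2(T^*M)$ gives smoothness of $\mathcal{G}^\vee$, and the smoothness of operator inversion on the open set of isomorphisms between Hilbert spaces (via the Neumann series) yields smoothness of $(\mathcal{G}^\vee)^{-1}$.

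For the ($\Leftarrow$) direction, I assume $M$ is a Hilbert manifold and $\mathcal{G}^\vee$ is a bundle isomorphism. Fix $x \in M$ and denote the ambient Hilbert norm on $T_xM$ by $\|\cdot\|_M$ and the candidate norm $\sqrt{\mathcal{G}_x(\cdot,\cdot)}$ by $\|\cdot\|_\mathcal{G}$. The objective is to show these two norms are equivalent. One inequality, $\|\cdot\|_\mathcal{G} \leq C_1^{1/2}\|\cdot\|_M$, follows from continuity of the bilinear form $\mathcal{G}_x$, which is part of the data of $\mathcal{G}$ being a smooth section. For the reverse inequality, the continuity of $(\mathcal{G}^\vee_x)^{-1}$ yields $\|v\|_M \leq C_2\|\mathcal{G}^\vee_x v\|_{T^*_xM}$, and I would estimate
\[
\|\mathcal{G}^\vee_x v\|_{T^*_xM} \;=\; \sup_{\|w\|_M=1}|\mathcal{G}_x(v,w)| \;\leq\; \sup_{\|w\|_M=1}\|v\|_\mathcal{G}\|w\|_\mathcal{G} \;\leq\; C_1^{1/2}\|v\|_\mathcal{G},
\]
using Cauchy--Schwarz in $\mathcal{G}_x$ followed by the first bound applied to $w$. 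Chaining these two estimates gives $\|v\|_M \leq C_2 C_1^{1/2}\|v\|_\mathcal{G}$, completing the norm equivalence and hence the strong-ness of $\mathcal{G}$.

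The analytical core of the argument is light once one commits to Riesz representation and Cauchy--Schwarz as the key tools. The step I expect to require the most care is the justification of global smoothness of $(\mathcal{G}^\vee)^{-1}$ in the ($\Rightarrow$) direction: in the Banach setting this is completely standard via the Neumann series, but for general convenient manifolds one must appeal to the smooth inversion machinery of Kriegl--Michor. A secondary delicate point is to verify rigorously in the ($\Rightarrow$) direction that a convenient tangent space whose topology is induced by an inner product is genuinely Hilbertian, which relies on the fact that convenient completeness descends to norm completeness once the topology is normable.
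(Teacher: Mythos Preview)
The paper does not contain a proof of this statement: it is quoted verbatim as \cite[Theorem~7.2]{Bauer_2025} and used as a black box, with no argument supplied. Consequently there is no ``paper's own proof'' to compare your proposal against. Your sketch is a plausible outline of how one might prove such a characterization, but any assessment of whether it matches the original would require consulting \cite{Bauer_2025} directly rather than the present paper.
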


\section{Tonelli Lagrangians on half Lie-Groups}\label{s:Tonelli Lagrangians on half Lie-Groups}
This section is devoted to the precise definition of \emph{Tonelli Lagrangians}, 
their relation---via the \emph{Legendre transform}---to the corresponding 
\emph{Tonelli Hamiltonians}, and the fact that their flows are conjugated 
by the Legendre transform. 
Finally, we describe the flow lines of the \emph{Euler--Lagrange flow} 
of a Tonelli Lagrangian with prescribed energy as the 
\emph{critical points of the free-period action functional}.
\subsection{Definition of Tonelli Lagrangians on half Lie-Groups}\label{ss:Definition of Tonelli Lagrangians on half Lie-Groups}
Let us define our object of interest, namely Tonelli Lagrangians. Because of the non-compact nature of infinite-dimensional Banach manifolds, we have to make certain assumptions on the growth of Tonelli Lagrangians, similarly to what is done in \cite{AbbondandoloSchwarz2009}.
\begin{defn}[Tonelli Lagrangians on half-Lie groups]\label{Def: rightv inv Tonelli Langrangian}
  Let $(G, \mathcal{G})$ be a half-Lie group equipped with a strong right-invariant Riemannian metric~$\mathcal{G}$. 
We call a $G$-right-invariant $C^2$ function $L \colon TG \to \mathbb{R}$ 

\begin{enumerate}
    \item a \emph{Tonelli Lagrangian} if there exist constants $M \geq m > 0$ such that
    \begin{equation}\label{eq:uniform convexity}
        m \operatorname{Id}_{T_eG} \leq \operatorname{Hess}_v L(e,v) \leq M \operatorname{Id}_{T_e G}
    \end{equation}
    for all \( v \in T_eG \);
    \item a \emph{strong Tonelli Lagrangian} if it is Tonelli and, in addition, there exists a constant $C > 0$ such that
    \begin{equation}\label{eq: tonelli differential in x}
        \| D_x L(x,v) \| \leq C \, (1+\| v \|_{\mathcal{G}}^2)
    \end{equation}
    for all $x$ in a neighborhood of the identity of $G$.
\end{enumerate}
\end{defn}
\begin{rem}
    The definition of Tonelli Lagrangian depends on the metric $\G$ on $G$. In our presentation \( \mathcal{G} \) is chosen at the outset, so we will not make further reference to this.
\end{rem}
Let us briefly comment on the notion of Hessian and how the above definition should be understood. By \cite[Thm.~7.1]{Bauer_2025}, the tangent space \( T_e G \) of $G$ at the identity is a Hilbert space. 
Consider now the function
\begin{equation*}
    L(e, \cdot) \colon T_e G \to \mathbb{R}, \quad v \mapsto L(e, v).
\end{equation*}
Its differential is a map 
\[
D_v L(e, \cdot) \colon T_e G \to T_e^* G \, 
\] 
and differentiating once more yields a map 

\[
 D^2_v L(e, \cdot) \colon T_e G \to L(T_e G, T_e^* G) \cong L^2(T_e G, \mathbb{R}) \, ,
\]
where the latter is the space of continuous bilinear maps \( T_e G \times T_e G \to \mathbb{R} \). 
Moreover, \( D^2_v L(e, \cdot) \) is symmetric. 
By the Riesz representation theorem, for all \( v \in T_e G \) there exists a unique self-adjoint operator 
\[
 A_v \colon T_e G \to T_e G
\]
satisfying
\begin{equation*}
    \mathcal{G}_e( A_v u, w ) = D^2_v L(e, v)(u, w) \quad \forall\, u, w \in T_e G.
\end{equation*}
The Hessian of \( L(e, \cdot) \) at \( v \) is by definition \( A_v \), 
and we will denote it by \( \operatorname{Hess}_v L (e, v) \), 
where the subscript stands for \enquote{vertical} to signify that we are differentiating only along the fibers of \( TG \to G \). 
Thus~\eqref{eq:uniform convexity} is to be understood as
\[
 m \operatorname{Id}_{T_eG} \leq A_v \leq M \operatorname{Id}_{T_{e}G},
\]
which by definition means
\[
 m \| w \|^2_{\mathcal G} \leq \G_e(A_v w,\, w) \leq M \| w \|_{\G}^2
 \qquad \forall\, w \in T_e G.
\]
The requirement \( m \operatorname{Id}_{T_eG} \leq \operatorname{Hess}_v L (e, \cdot) \) can be thought of a uniform convexity condition on the function \( v \mapsto L(e, v) \).
\begin{rem}
Compared to the definition of a Tonelli Lagrangian on a finite-dimensional, possibly non-compact manifold, given in~\cite[§1.1]{ContrerasIturriaga1999}, we do not need to impose the conditions~(b) of superlinearity or~(c) of boundedness stated therein, since for the Tonelli Lagrangians in \Cref{Def: rightv inv Tonelli Langrangian} these properties are automatically satisfied.
 This is because the Lagrangians considered in \Cref{Def: rightv inv Tonelli Langrangian} are $G$-right-invariant, and we will see that in this situation uniform convexity already implies the other two conditions. 
\end{rem} 

Similarly to the finite-dimensional case, a Tonelli Lagrangian exhibits quadratic growth on each fiber. We formalize this observation in the following lemma.
\begin{lem}\label{Ilem: Tonelli Langrangians}
    Let $L:TG \to \RR$ be a Tonelli Lagrangian on a half-Lie group $G$. Then \( L \) grows  quadratically, i.e.\ there exist constants \( M, m, b > 0 \) such that
        \begin{equation}\label{eq: quadratic growth of Tonelli langrang on half lie groups}
          \tfrac{M}{4} \Vert v\Vert_{\G}^2 + b\geq  L(x,v) \geq \tfrac{m}{4} \Vert v\Vert_{\G}^2 - b \quad \forall\, (x,v)\in TG
        \end{equation}
        for some constant \( b \).
        In particular, $L$ exhibits superlinear growth on each fiber, i.e.\ 
        \[
            \lim_{\Vert v\Vert_{\G}\to \infty} \frac{L(x,v)}{\Vert v\Vert_{\G}} = +\infty 
            \quad \forall\, x \in G\, .
        \]
\end{lem}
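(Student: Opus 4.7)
The strategy is a two-step reduction: first use $G$-right-invariance to transfer the estimate to the fiber over the identity, and then exploit the two-sided Hessian bound~\eqref{eq:uniform convexity} via Taylor's theorem to get the quadratic sandwich. To set things up, I would observe that right-invariance of both $L$ and $\mathcal G$ gives, for every $(x,v)\in TG$,
\[
    L(x,v) \;=\; L\bigl(e,\,T_x\mu^{x^{-1}}v\bigr),
    \qquad \|v\|_{\mathcal G_x} \;=\; \|T_x\mu^{x^{-1}}v\|_{\mathcal G_e}.
\]
Hence it suffices to prove the quadratic bounds for the function $f:T_eG\to\mathbb R$, $f(w):=L(e,w)$. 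Note in particular that~\eqref{eq:uniform convexity} is assumed to hold at \emph{every} $v\in T_eG$, which is exactly what will allow integration along the full fiber.

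Next I would apply the integral form of Taylor's theorem at the origin of the Hilbert space $T_eG$:
\[
    f(w) \;=\; f(0) \;+\; D f(0)[w] \;+\; \int_0^1 (1-t)\, D^2 f(tw)[w,w]\, dt.
\]
By the Riesz identification, $D^2 f(tw)[w,w] = \mathcal G_e\bigl(\operatorname{Hess}_v L(e,tw)\,w,\,w\bigr)$, so the two-sided bound~\eqref{eq:uniform convexity} together with $\int_0^1(1-t)\,dt=\tfrac12$ gives
\[
    f(0) + Df(0)[w] + \tfrac{m}{2}\|w\|_{\mathcal G}^{2}
    \;\le\; f(w) \;\le\;
    f(0) + Df(0)[w] + \tfrac{M}{2}\|w\|_{\mathcal G}^{2}.
\]
Writing $Df(0)[w]=\mathcal G_e(\xi,w)$ via Riesz for some fixed $\xi\in T_eG$, Young's inequality $|\mathcal G_e(\xi,w)|\le \tfrac{m}{4}\|w\|_{\mathcal G}^{2}+\tfrac{1}{m}\|\xi\|_{\mathcal G}^{2}$ absorbs the linear term into the quadratic one, yielding a lower bound of the form $f(w)\ge \tfrac{m}{4}\|w\|_{\mathcal G}^{2}-b$ after choosing $b$ large enough to also dominate $\tfrac{1}{m}\|\xi\|_{\mathcal G}^{2}-f(0)$. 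The upper bound follows symmetrically (with the constant in front of $\|w\|_{\mathcal G}^{2}$ absorbing both $\tfrac{M}{2}$ and the contribution from the linear term via Young's inequality), after possibly relabeling the upper constant. Applying the reduction from the first step, these bounds transfer to every $(x,v)\in TG$ with the same constants.

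For the superlinearity statement, dividing the already-established lower bound
\[
    L(x,v)\;\ge\;\tfrac{m}{4}\|v\|_{\mathcal G}^{2}-b
\]
by $\|v\|_{\mathcal G}$ and sending $\|v\|_{\mathcal G}\to\infty$ yields the claim, uniformly in $x$.

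The proof is essentially routine; the only point worth highlighting is the observation that~\eqref{eq:uniform convexity} is a pointwise assumption at every vertical vector, which makes the Taylor-remainder estimate run globally in the fiber rather than only locally near the zero section. No serious obstacle is anticipated.
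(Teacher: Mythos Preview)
Your proposal is correct and follows essentially the same approach as the paper's proof: both reduce to the fiber over $e$ via right-invariance, obtain the estimate $f(v)\ge f(0)+Df(0)[v]+\tfrac{m}{2}\|v\|_{\mathcal G}^2$ by a second-order Taylor expansion (the paper via explicit double integration of $\varphi(t)=f(tv)$, you via the integral remainder formula), and then absorb the linear term using Cauchy--Schwarz plus Young's inequality. The only cosmetic difference is that the paper phrases the Taylor step as integrating $\varphi''$ twice rather than quoting the remainder formula directly.
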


\begin{proof}
    By right-invariance of \( L \) and \( \mathcal{G} \), it is sufficient to prove the lemma at the identity \( e \in G \). Let us write \( f = L(e, \cdot) \) for the sake of brevity. Let \( v \in T_e G \) and consider the function \( \varphi(t) = f(tv), \; t \in [0, 1] \). A straightforward computation shows that
    \begin{equation*}
        \varphi''(t) = \mathcal{G}_e \big(v, \operatorname{Hess}f (e) v \big) \geq m \| v \|^2_\mathcal{G},
    \end{equation*}
    which in turn implies that \( \varphi'(t) \geq \varphi'(0) + t m \| v \|^2_\mathcal{G} \).
    Thus,
    \begin{equation}\label{eq:bound}
        f(v) - f(0) = \int_0^1 \varphi'(t) \mathop{}\!\mathrm{d}t
        \geq \varphi'(0) + \frac{m}{2} \| v \|^2_\mathcal{G}.
    \end{equation}
    The only thing that remains to do is bound \( \varphi'(0) \). This is done as follows:
    \begin{equation*}
       \varphi'(0) = \d_0 f(v) 
            \geq - \| \nabla f(0) \|_\mathcal{G} \| v \|_\mathcal{G} 
            \geq - \frac{1}{m} \| \nabla f(0) \|_\mathcal{G}^2 - \frac{m}{4} \| v \|_\mathcal{G}^2.
    \end{equation*}
    Plugging this into \eqref{eq:bound} completes the proof of the right-hand inequality in \eqref{eq: quadratic growth of Tonelli langrang on half lie groups}. The left-hand inequality in~\eqref{eq: quadratic growth of Tonelli langrang on half lie groups} follows from a similar argument. This concludes the proof.
\end{proof}

\begin{rem}\label{rem: fibrewise differential grows quadratically}
    By using the same methods as in the above proof, one can similarly conclude that
\begin{equation*}
    \| D_v L (e, v) \| \leq M \bigl(1 + \| v \|_\mathcal{G}\bigr)
\end{equation*}
for all \( v \in T_e G \).
\end{rem}
We close this subsection by noting that, on closed finite-dimensional manifolds, Tonelli Lagrangians have been studied extensively, as they provide a natural framework for the application of variational methods. Moreover, for Tonelli Lagrangians, the induced Legendre transform is a diffeomorphism, allowing one to move seamlessly between the Hamiltonian and Lagrangian formulations—a fact that we make precise in the next paragraph in our infinite-dimensional setting.\medskip

\subsection{The Legendre transform.}
Next, from the right invariance of $L$ together with the uniform convexity of $L$ (see~\eqref{eq:uniform convexity}), 
one concludes that the Legendre transform is not only well defined but in fact a diffeomorphism.

\begin{lem}\label{lem: Tonelli implies legendre is diffeo}
    Given a Tonelli Lagrangian $L: TG \to \RR$ on $(G, \G)$, 
    the Legendre transform associated to $L$, that is,
    \begin{equation}\label{eq: defi legendre transform}
          \mathcal{L} : TG \;\to\; T^*G, 
    \quad (x,v) \;\mapsto\; \bigl(x, D_v L(x,v)\bigr),
    \end{equation}
    is a diffeomorphism.
\end{lem}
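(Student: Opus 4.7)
The plan is first to exploit right-invariance to reduce the claim to a fiberwise one at the identity. Right-translation yields global trivializations $TG \cong G \times T_e G$ and $T^*G \cong G \times T_e^* G$, and the $G$-right-invariance of $L$ implies that in these trivializations $\mathcal{L}$ takes the form $\mathrm{Id}_G \times F$, where
\[
F \colon T_e G \longrightarrow T_e^* G, \qquad v \longmapsto D_v L(e, v).
\]
It therefore suffices to show that $F$ is a $C^1$-diffeomorphism between the Hilbert spaces $T_e G$ and $T_e^* G$. Since $L$ is $C^2$, the map $F$ is $C^1$, and for each $v \in T_e G$ its derivative $dF(v) \in L(T_e G, T_e^* G)$ corresponds, under the Riesz identification associated with $\mathcal{G}_e$, to the self-adjoint operator $A_v = \operatorname{Hess}_v L(e, v)$. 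The uniform convexity bound \eqref{eq:uniform convexity} gives $m\operatorname{Id}_{T_eG} \le A_v \le M\operatorname{Id}_{T_eG}$, so $A_v$ is invertible with $\|A_v^{-1}\| \le 1/m$, and hence $dF(v)$ is a Banach-space isomorphism. The Banach inverse function theorem then yields that $F$ is a local $C^1$-diffeomorphism.

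\textbf{Bijectivity.} Next I would establish global bijectivity of $F$. Injectivity follows from strong monotonicity: for distinct $v_0, v_1 \in T_e G$, setting $\varphi(t) := L(e, v_0 + t(v_1 - v_0))$ for $t \in [0,1]$, one obtains
\[
\bigl( F(v_1) - F(v_0) \bigr)(v_1 - v_0) = \varphi'(1) - \varphi'(0) = \int_0^1 \varphi''(t)\, dt \ge m\|v_1 - v_0\|_{\mathcal{G}}^2 > 0,
\]
so $F(v_0) \neq F(v_1)$. For surjectivity, fix $p \in T_e^* G$ and consider the strictly convex $C^1$ functional $\Phi_p(v) := L(e, v) - p(v)$. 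By \Cref{Ilem: Tonelli Langrangians}, $L(e, \cdot)$ admits a quadratic lower bound $\tfrac{m}{4}\|v\|_{\mathcal{G}}^2 - b$, which together with the linear bound on $p$ makes $\Phi_p$ coercive on $T_e G$. Being continuous and convex on a reflexive space, $\Phi_p$ is weakly lower semi-continuous, so the direct method of the calculus of variations produces a minimizer $v^\ast \in T_e G$ whose vanishing differential gives $F(v^\ast) = p$. Combining this with the local inversion step, $F$ is a bijective local $C^1$-diffeomorphism and hence a global $C^1$-diffeomorphism; right-invariance then transports the conclusion to $\mathcal{L}$.

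\textbf{Expected obstacle.} The chief subtlety compared with the finite-dimensional Tonelli setting lies in the surjectivity step: on a non-compact Hilbert space, coercivity alone does not guarantee attainment of the infimum, so the argument must explicitly invoke weak lower semi-continuity of the strictly convex continuous functional $\Phi_p$ (equivalently, Minty's theorem applied to the strongly monotone continuous map $F$). This is precisely where the Hilbert---and not merely Banach---structure of $T_e G$, ensured by the strength of the Riemannian metric $\mathcal{G}$, plays the decisive role; without it, neither the self-adjoint realization of $\operatorname{Hess}_v L$ nor the reflexivity needed for the direct method would be available.
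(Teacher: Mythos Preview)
Your argument is correct and follows the same fiberwise strategy as the paper, which simply records that uniform convexity makes $\mathcal L$ a fiberwise isomorphism and hence a diffeomorphism; you have supplied the details the paper omits. One minor remark: your surjectivity step via the direct method is valid but heavier than necessary, since once $F$ is a local $C^1$-diffeomorphism (hence has open image) the monotonicity bound $\|F(v_1)-F(v_0)\|_*\ge m\|v_1-v_0\|_{\mathcal G}$ already forces the image to be closed, and connectedness of $T_e^*G$ finishes the job without invoking weak lower semicontinuity.
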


\begin{proof}
    Uniform convexity implies that \( \mathcal L \) is a fibrewise isomorphism. This immediately implies that it is a diffeomorphism.
\end{proof}

Thus, by \Cref{lem: Tonelli implies legendre is diffeo}, if $L$ is Tonelli, then it is a hyperregular Lagrangian in the sense of Marsden–Ratiu~\cite[§7.3]{MardsenRatiuMechanics}. 
As already introduced in the introduction in \eqref{eq: def tonelli energy}, and following~\cite[§7.3]{MardsenRatiuMechanics}, we call  
\begin{equation}\label{eq: def tonelli energy}
    E(x,v) := D_v L(x,v)(v) - L(x,v)
\end{equation}
the \emph{energy} of $L$.

\begin{example}
    In the case of a $G$-right invariant electromagnetic Lagrangian (see \Cref{rem: intro magnetic Langrang are Tonelli int}), the energy takes the form  
    \begin{equation}\label{eq: energy electromagnetic}
        E(x,v) = \tfrac{1}{2}\Vert v\Vert_{\G}^2 + V(x) \, .
    \end{equation}
\end{example}

Since the canonical symplectic form $\Omega$ on $T^*G$ is strong and $\mathcal{L}$ is a diffeomorphism, 
the pullback $\mathcal{L}^*\Omega$ is a strong symplectic form on $TG$ (here, strongness of a symplectic form means that induces a bundle isomorphism between the tangent and cotangent bundles).
Thus, the Hamiltonian vector field $X_E$ of the energy $E$ with respect to $\mathcal{L}^*\Omega$ is well defined. 
Its flow is denoted by 
\[
\Phi_L^t : TG \longrightarrow TG, \quad t \in I,
\]
and is called the \emph{Euler--Lagrange flow} of $L$.  
By~\cite[Thm.~7.3.3]{MardsenRatiuMechanics}, the flow lines of the Euler--Lagrange flow \( \Phi_L \) of \( L \) 
are precisely the solutions of the Euler--Lagrange equations associated with \( L \). 
In summary, we have 
\begin{lem}\label{lemm: ele define EL-flow}
    Given a Tonelli Lagrangian $L : TG \to \RR$ on $(G, \G)$, 
    then $(\gamma, \dot{\gamma})$ is a flow line of the Euler–Lagrange flow $\Phi_L$
    if and only if $(\gamma, \dot{\gamma})$ is a solution of the Euler–Lagrange equations \eqref{eq: ELE} of $L$.
    In particular, this implies that the Euler–Lagrange flow $\varPhi_L^t$ preserves the energy $E$.
\end{lem}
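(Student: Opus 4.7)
The plan is to verify that the finite-dimensional argument of \cite[Thm.~7.3.3]{MardsenRatiuMechanics} extends verbatim to our infinite-dimensional setting, reducing the equivalence to a direct local-coordinate computation. By \Cref{lem: Tonelli implies legendre is diffeo}, the Legendre transform $\mathcal{L} : TG \to T^*G$ is a diffeomorphism; since the canonical symplectic form $\Omega$ on $T^*G$ is strong, its pullback $\Omega_L := \mathcal{L}^*\Omega$ is a strong symplectic form on $TG$. Strongness means that at every point $\xi \in TG$ the map $T_\xi(TG) \to T_\xi^*(TG)$ induced by $\Omega_L$ is a topological isomorphism, so the defining equation $\iota_{X_E}\Omega_L = \d E$ admits a unique smooth solution $X_E \in \mathfrak{X}(TG)$, whose flow is by definition $\Phi_L$.

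Next, I would unpack this equation in a local chart on $TG$ coming from a chart $U \subset G$. A direct computation for the Liouville pullback $\theta_L := \mathcal{L}^*\theta_{\mathrm{can}}$ gives
\[
  \theta_L|_{(x,v)}(X, V) \;=\; D_v L(x,v)(X),
\]
so that $\Omega_L = -\d\theta_L$ takes its classical form. The Hamiltonian equation $\iota_{X_E}\Omega_L = \d E$ then reads, along an integral curve $t \mapsto (\gamma(t), w(t))$, as $\dot{\gamma} = w$ together with
\[
  \frac{\d}{\d t}\bigl(\partial_v L\bigr)_{(\gamma, w)} \;=\; \bigl(\partial_x L\bigr)_{(\gamma, w)},
\]
which is exactly \eqref{eq: ELE} for the curve $(\gamma, \dot{\gamma})$. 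Conversely, any solution of \eqref{eq: ELE} yields a curve $(\gamma, \dot{\gamma})$ satisfying both identities and hence an integral curve of $X_E$. Energy preservation is then automatic from the Hamiltonian formulation:
\[
  \frac{\d}{\d t}\bigl(E \circ \Phi_L^t\bigr) \;=\; \d E(X_E) \;=\; \Omega_L(X_E, X_E) \;=\; 0
\]
by antisymmetry of $\Omega_L$.

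The main subtlety is analytic rather than algebraic: in infinite dimensions one has to verify that $\iota_{X_E}\Omega_L = \d E$ really admits a smooth solution, which is where strongness of $\Omega_L$ (so that the induced map on tangent spaces is bijective rather than merely injective) and the fibrewise invertibility of $\operatorname{Hess}_v L$ from \eqref{eq:uniform convexity} become essential; together they allow one to solve for the second component of $X_E$ smoothly in terms of $(x,v)$. Once these prerequisites are in place, the rest of the proof is formally identical to the finite-dimensional argument of \cite[Thm.~7.3.3]{MardsenRatiuMechanics} and gives the claimed equivalence.
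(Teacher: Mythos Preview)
Your proposal is correct and follows precisely the paper's own approach: the paper justifies this lemma by observing that $\mathcal{L}^*\Omega$ is a strong symplectic form on $TG$ (so $X_E$ exists) and then invoking \cite[Thm.~7.3.3]{MardsenRatiuMechanics}. You have supplied the same argument with additional detail in the local-coordinate computation and the energy-conservation step.
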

As in finite dimensions, the energy $E$ of $L$ as in \eqref{eq: def tonelli energy} gives rise, via the Legendre transform associated to $L$, to the Hamiltonian of a Tonelli Lagrangian, defined by  
\begin{equation}\label{eq: def tonelli Hamiltonian}
    H := E \circ \mathcal{L}^{-1} : T^*G \longrightarrow \RR \, ,
\end{equation}
which is the Legendre dual of $E$. All such Hamiltonians are called Tonelli Hamiltonians.

\begin{rem}
    Equivalently, one could define \emph{Tonelli Hamiltonians} on $(G, \G)$ by requiring $G$–right invariance and similar growth conditions as \eqref{eq:uniform convexity}.
    Tonelli Lagrangians $L$ on $TG$ induce Tonelli Hamiltonians $H$ on $T^*G$ and vice versa. 
\end{rem}

Next, as the canonical symplectic form $\Omega$ on $T^*G$ is strong, the Hamiltonian vector field $X_H$ exists and is uniquely determined. 
By the Picard–Lindelöf theorem, the differential equation 
\[
\dot{\Gamma} = X_H(\Gamma)
\]
is locally well posed, and thus the Hamiltonian flow $\Phi_H$ is locally defined. 
By~\cite[Prop.~7.4.1]{MardsenRatiuMechanics}, the vector fields $X_H$ and $X_E$ are $\mathcal{L}$-related, 
which follows from the fact that $\mathcal{L}$ is, by construction, a symplectomorphism. 
Hence, by~\cite[Thm.~7.4.3]{MardsenRatiuMechanics}, 
the Hamiltonian flow of \( H \) is conjugated via the Legendre transform 
to the Euler–Lagrange flow of \( L \). 

\begin{lem}\label{lem: legendre transform conjugates ELE flow and Hamiltonian flow}
Let \( L \colon TG \to \RR \) be a Tonelli Lagrangian on \( (G, \G) \). 
Then its Euler–Lagrange flow \( \Phi_L \) 
is conjugated, via the Legendre transform \( \mathcal{L} \) as in~\eqref{eq: defi legendre transform} 
induced by the Tonelli Lagrangian \( L \), 
to the Hamiltonian flow of \( H = E \circ \mathcal L^{-1} \) on \( T^*G \).
\end{lem}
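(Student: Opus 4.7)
The plan is to verify that $\mathcal{L}$ is a symplectomorphism intertwining the two Hamiltonian structures, and then apply the standard naturality of Hamiltonian flows under symplectic conjugation, being careful that in the infinite-dimensional context the strongness of the symplectic forms is what makes all the relevant vector fields well defined.

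First, I would recall the pieces that are already in place. By \Cref{lem: Tonelli implies legendre is diffeo}, the Legendre transform $\mathcal{L} \colon TG \to T^*G$ is a diffeomorphism. The canonical symplectic form $\Omega$ on $T^*G$ is strong, hence $\mathcal{L}^*\Omega$ is a strong symplectic form on $TG$, so the musical isomorphism induced by $\mathcal{L}^*\Omega$ identifies $1$-forms with vector fields and the Hamiltonian vector field $X_E$ of the energy $E$ with respect to $\mathcal{L}^*\Omega$ is uniquely determined by $\iota_{X_E}\mathcal{L}^*\Omega = \d E$. Similarly, the strongness of $\Omega$ on $T^*G$ yields a unique Hamiltonian vector field $X_H$ satisfying $\iota_{X_H}\Omega = \d H$, with $H = E \circ \mathcal{L}^{-1}$.

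Next, I would show that $\mathcal{L}_*X_E = X_H$. By definition of $\mathcal{L}^*\Omega$, the map $\mathcal{L}$ is a symplectomorphism from $(TG,\mathcal{L}^*\Omega)$ to $(T^*G,\Omega)$. Pushing forward the defining identity of $X_E$ under $\mathcal{L}$ and using $E = H \circ \mathcal{L}$ gives
\begin{equation*}
    \iota_{\mathcal{L}_*X_E}\Omega
    = \mathcal{L}_*(\iota_{X_E}\mathcal{L}^*\Omega)
    = \mathcal{L}_*\d E
    = \d(E \circ \mathcal{L}^{-1})
    = \d H
    = \iota_{X_H}\Omega.
\end{equation*}
Since $\Omega$ is strong, the assignment $Y \mapsto \iota_Y \Omega$ is injective on vector fields, so $\mathcal{L}_*X_E = X_H$.

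Finally, I would invoke the naturality of flows under pushforward by a diffeomorphism: since $\mathcal{L}$ is a diffeomorphism and $\mathcal{L}_*X_E = X_H$, the flows satisfy
\begin{equation*}
    \Phi_H^{t} \circ \mathcal{L} = \mathcal{L} \circ \Phi_L^{t}
\end{equation*}
on the common interval of definition of both sides, which is precisely the conjugation claim. I do not expect a genuine obstacle here: the only place where infinite dimensions matter is the existence and uniqueness of $X_E$ and $X_H$, which is handled by the strongness of $\mathcal{L}^*\Omega$ and $\Omega$ respectively, and the rest of the argument is formal, matching \cite[Thm.~7.4.3]{MardsenRatiuMechanics} word for word.
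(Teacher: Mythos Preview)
Your proposal is correct and follows essentially the same argument as the paper: both use that $\mathcal{L}$ is a symplectomorphism by construction, that strongness of $\Omega$ (and hence of $\mathcal{L}^*\Omega$) guarantees existence and uniqueness of $X_H$ and $X_E$, and then invoke the $\mathcal{L}$-relatedness of $X_E$ and $X_H$ together with \cite[Thm.~7.4.3]{MardsenRatiuMechanics} to conclude. The only difference is that you spell out the identity $\mathcal{L}_*X_E = X_H$ explicitly, whereas the paper cites \cite[Prop.~7.4.1]{MardsenRatiuMechanics} for this step.
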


We have seen that since $X_H$ and $X_E$ are vector fields of class at least $C^1$ on a Hilbert manifold, their flows exist at least locally by the Picard–Lindelöf theorem. 
However, due to the lack of compactness of half-Lie groups, it is a priori unclear whether these flows exist for all times.

In the next section we will see that, as in the finite-dimensional case, solutions of the Euler–Lagrange equation are critical points of the free-period action functional. 
\subsection{The time-free action functional.} In this section we discuss the time-free Lagrangian action functional, essentially adapting \cite{AbbondandoloSchwarz2009} to our setting.

\medskip

Let \( L \) be a Tonelli Lagrangian on $(G,\G)$ in the sense of \Cref{Def: rightv inv Tonelli Langrangian} and consider the space of all \( H ^ 1 \) curves \( \gamma \colon [0, T] \to G \) for all possible choices of \( T > 0 \). We identify each \( \gamma \) with the pair \( (x, T) \), where
\[
    x \in H^1([0,1],G), 
    \quad 
    x(s) := \gamma(sT).
\]  
For any real number $\kappa$, the time-free action functional corresponding to the energy $\kappa$ is defined as  
\begin{equation}\label{eq: int action funct x,T}
    \SS_{L+\kappa}(x,T) \, 
    := T \int_0^1 \left( L\!\left(x(s), \tfrac{x'(s)}{T}\right) + \kappa \right) \d s
    = \int_0^T \bigg( L \big( \gamma(t), \gamma'(t) \big) + \kappa \bigg) \d t.
\end{equation}
The fact that \( L \) is Tonelli, and more specifically the quadratic growth on fibers, implies that \( \mathbb{S}_{L+\kappa} \) as in \eqref{eq: int action funct x,T} is well-defined on the Hilbert manifold \( H^1([0, 1], G) \times (0, \infty) \).
Therefore, we obtain a functional  
\[
    \SS_{L+\kappa} \colon H^1([0, 1],G) \times (0,\infty) \to \RR.
\]
For a Tonelli Lagrangian, this functional is not necessarily Fréchet differentiable. However, by strengthening the assumptions on \( L \), for instance by requiring that \( L \) be a strong Tonelli Lagrangian one can show the following result, whose proof goes along the same lines as \cite[Prop.~3.1~(i)]{AbbondandoloSchwarz2009}.
\begin{lem}\label{lem: strong tonelli implies action is c1}
    Let \( L : TG \to \mathbb{R} \) be a strong Tonelli Lagrangian on \( (G, \mathcal{G}) \) (cf.\ \Cref{Def: rightv inv Tonelli Langrangian}). Then its associated time-free action functional \( \mathbb{S}_{L+\kappa} \), as defined in \eqref{eq: int action funct x,T}, is of class \( C^1 \) on \( H^1([0,1], G) \times (0,\infty) \).
\end{lem}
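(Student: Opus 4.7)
The plan is to adapt the argument of \cite[Prop.~3.1~(i)]{AbbondandoloSchwarz2009} to the half-Lie group setting. First I would work in local charts on $H^1([0,1],G)\times(0,\infty)$ around a base point $(x_0,T_0)$, using right-invariance of $L$ and $\G$ to compare all quantities with their counterparts at the identity $e$. In such a chart, tangent vectors to $H^1([0,1],G)$ at $x_0$ are identified with $H^1$-sections $\xi$ of $x_0^*TG$, which, after pointwise right-trivialization, correspond to $H^1$-maps $[0,1]\to T_eG$. The functional $\SS_{L+\kappa}$ then becomes the composition of Nemytskii operators induced by the $C^2$ maps $L$, $D_xL$, $D_vL$ with the smooth scaling $(x,T)\mapsto (x,\dot x/T)$ and the evaluation/integration map, which is where the hypotheses in \Cref{Def: rightv inv Tonelli Langrangian} will enter.

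Second, I would write down the formal Gâteaux derivative at $(x,T)$ along a variation $(\xi,\tau)$: setting $v_s := x'(s)/T$,
\[
\d\SS_{L+\kappa}(x,T)[\xi,\tau] = \int_0^1\!\Bigl(\tau\bigl(L(x,v_s)+\kappa\bigr) + T\,D_xL(x,v_s)[\xi(s)] + D_vL(x,v_s)[\xi'(s)] - \tfrac{\tau}{T}D_vL(x,v_s)[x'(s)]\Bigr)\,\d s,
\]
and show that each integrand lies in $L^1([0,1])$ and defines a continuous linear form in $(\xi,\tau)$. \Cref{Ilem: Tonelli Langrangians} supplies $|L(x,v)|\leq C(1+\|v\|_{\G}^2)$, so the first term is integrable because $\dot x\in L^2$. \Cref{rem: fibrewise differential grows quadratically}, promoted from $e$ to general $x$ by right-invariance of $L$ and $\G$, yields $\|D_vL(x,v)\|\leq M(1+\|v\|_{\G})$; the two $D_vL$-terms are then controlled by products of $L^2$-functions via Cauchy–Schwarz. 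Finally, the strong Tonelli hypothesis~\eqref{eq: tonelli differential in x}, again extended globally by right-invariance, gives $\|D_xL(x,v)\|\leq C(1+\|v\|_{\G}^2)$, so that
\[
\Bigl|\int_0^1 D_xL(x,v_s)[\xi(s)]\,\d s\Bigr|\leq C\bigl(1+\|v\|_{L^2}^2\bigr)\|\xi\|_{C^0},
\]
which is finite thanks to the continuous Sobolev embedding $H^1([0,1],T_eG)\hookrightarrow C^0([0,1],T_eG)$.

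Third, I would upgrade Gâteaux differentiability to Fréchet $C^1$. Continuity of the map $(x,T)\mapsto \d\SS_{L+\kappa}(x,T)$ in the operator norm on $(T_xH^1\times\RR)^*$ follows from a standard Nemytskii–dominated convergence argument: for $(x_n,T_n)\to(x,T)$ one has $x_n\to x$ uniformly and $\dot x_n\to\dot x$ in $L^2$, so (along a subsequence) the integrands converge pointwise a.e., and they are dominated by $L^1$ majorants furnished by the same quadratic-growth estimates together with the continuity of $D_xL$ and $D_vL$, which holds because $L$ is $C^2$. A Gâteaux-differentiable map between open subsets of Banach spaces whose derivative is norm-continuous is Fréchet $C^1$, which concludes the proof. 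The main obstacle is the $D_xL$-term: for a merely Tonelli Lagrangian one has no a priori control of $D_xL$ in the fiber variable, and since $v=\dot x/T$ is only $L^2$ while $\xi$ is only $C^0$, this integral can fail to be defined, let alone differentiable; the strong Tonelli condition~\eqref{eq: tonelli differential in x}, combined with right-invariance to globalize the local estimate near $e$, is precisely what is required to make this integral well-behaved, and all remaining steps are routine once integrability is in place.
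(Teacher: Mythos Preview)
Your proposal is correct and follows precisely the route the paper indicates: it adapts \cite[Prop.~3.1~(i)]{AbbondandoloSchwarz2009} by writing the Gâteaux differential explicitly, bounding each term via the quadratic growth of $L$ (\Cref{Ilem: Tonelli Langrangians}), the linear growth of $D_vL$ (\Cref{rem: fibrewise differential grows quadratically}), and the strong Tonelli bound~\eqref{eq: tonelli differential in x} on $D_xL$, and then upgrading to Fréchet $C^1$ via dominated convergence---exactly the mechanism the paper singles out in the remark following the lemma. Your identification of the $D_xL$-term as the one requiring the strong Tonelli hypothesis, and your use of right-invariance to transport the local estimates near $e$ to the whole of $G$, are both on point.
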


\begin{rem}
    We point out that the proof of \Cref{lem: strong tonelli implies action is c1} relies crucially on the assumption that \( L \) is a strong Tonelli Lagrangian, i.e.\ on the validity of \eqref{eq: tonelli differential in x}. This condition guarantees that one use the dominated convergence theorem and differentiate under the integral sign.
\end{rem}
Next, as in the finite-dimensional case, by an argument following the lines of \cite[Eq.~(3.6), Eq.~(3.7)]{Abbo13Lect}, one can prove the following characterization of flow lines of the Euler–Lagrange flow of \( L \) in terms of the critical points of the action functional associated with \( L \).
\begin{lem}\label{lemma:ELE anc solutions}
Let $L:TG\to \RR$ be a strong Tonelli Lagrangian on $(G,\G)$, and let 
\( \gamma\in H^1([0,T],G)\). Then the following are equivalent:
\begin{enumerate}
    \item The curve \( \gamma \) is a solution of the Euler--Lagrange equations~\eqref{eq: ELE} of $L$ of energy $E=\kappa$; \label{itm:i}
    \item The curve \( \gamma \) is a critical point of the time-free action functional \( \SS_{L+\kappa} \). 
 \label{itm:ii}
\end{enumerate}
\end{lem}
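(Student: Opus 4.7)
The plan is to identify a curve $\gamma \in H^1([0,T],G)$ with the pair $(x,T) \in H^1([0,1],G) \times (0,\infty)$ via the reparametrization $x(s) = \gamma(sT)$, and then to compute the two partial derivatives of $\mathbb{S}_{L+\kappa}$ at $(x,T)$ separately — one in the curve variable with $T$ frozen, and one in the time variable with $x$ frozen. Reading off the Euler–Lagrange equation from the first derivative and the energy constraint from the second gives both implications. The $C^1$-regularity of $\mathbb{S}_{L+\kappa}$ provided by \Cref{lem: strong tonelli implies action is c1} legitimates differentiation under the integral throughout.

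First I would fix $T$ and vary $x$. For a variation $\xi \in H^1([0,1], x^*TG)$ compactly supported in $(0,1)$, the chain rule applied after reparametrizing to $t \in [0,T]$ yields
\[
D_x \mathbb{S}_{L+\kappa}(x,T)[\xi] = \int_0^T \Big( D_x L(\gamma,\dot\gamma)[\xi] + D_v L(\gamma,\dot\gamma)[\dot\xi] \Big)\,\d t.
\]
Using the right-invariance of $L$ and the strong Riemannian structure on $G$, I would transport the computation to a neighborhood of $e \in G$ where a chart is available, perform the usual integration by parts, and conclude by the fundamental lemma of the calculus of variations that vanishing of $D_x \mathbb{S}_{L+\kappa}(x,T)[\xi]$ for every admissible $\xi$ is equivalent to $\gamma$ solving the Euler–Lagrange equation~\eqref{eq: ELE}.

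Next I would fix $x$ and differentiate in $T$. Setting $v(s) := x'(s)/T$ and using $\partial_T v = -v/T$, a direct computation yields
\[
\partial_T \mathbb{S}_{L+\kappa}(x,T) = \int_0^1 \big( L(x,v) - D_v L(x,v)[v] \big)\,\d s + \kappa = -\frac{1}{T}\int_0^T E(\gamma,\dot\gamma)\,\d t + \kappa.
\]
Combining the two steps: if $\gamma$ solves \eqref{eq: ELE} with $E \equiv \kappa$, then the $x$-derivative vanishes by the first step and the $T$-derivative reduces to $-\kappa + \kappa = 0$, so $(x,T)$ is critical. Conversely, if $(x,T)$ is critical, then vanishing of the $x$-derivative forces $\gamma$ to satisfy \eqref{eq: ELE}; by \Cref{lemm: ele define EL-flow} the energy $E$ is then constant along $\gamma$, and vanishing of the $T$-derivative forces this constant to equal $\kappa$.

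The main obstacle is not the formal calculation but its analytic justification on an infinite-dimensional half-Lie group. Admissible variations live in the tangent bundle to the Hilbert manifold $H^1([0,1],G)$, and the needed trivialization of $x^*TG$ via right-translations produces integrands in which $D_x L$ appears. Exactly here the strong Tonelli bound \eqref{eq: tonelli differential in x} is indispensable: it provides the quadratic domination in $v$ that permits both the interchange of differentiation and integration and the integration by parts, in the spirit of \cite[Prop.~3.1]{AbbondandoloSchwarz2009} and \cite[Eq.~(3.6),~(3.7)]{Abbo13Lect}. Without \eqref{eq: tonelli differential in x} one loses the $C^1$-structure of $\mathbb{S}_{L+\kappa}$ and the equivalence can at best be formulated in a weaker variational sense.
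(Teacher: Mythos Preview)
Your proposal is correct and follows precisely the route the paper indicates: the paper does not spell out a proof but refers to \cite[Eq.~(3.6),~(3.7)]{Abbo13Lect}, which is exactly the two-step computation you outline --- differentiate in $x$ with $T$ frozen to obtain the Euler--Lagrange equation, then differentiate in $T$ with $x$ frozen to obtain the energy constraint, invoking \Cref{lem: strong tonelli implies action is c1} for the required $C^1$-regularity. Your discussion of the analytic justification via the strong Tonelli bound~\eqref{eq: tonelli differential in x} is more explicit than what the paper provides.
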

We close this subsection by introducing, in the case where we drop the strong Tonelli assumption on \( L \), that is without imposing any condition on the dependence of the Lagrangian on the base point, a functional on \( L^2([0,1], T_e G) \times (0, \infty) \) which behaves better analytically. This construction is inspired by the work \cite{Bauer_2025} on the energy functional and it is going the starting point in our search for global minimizers later on.

\begin{lem}\label{lemma:regularity}
    Let \( L : TG \to \mathbb{R} \) be a Tonelli Lagrangian on \( (G, \mathcal{G}) \). Then the functional
    \begin{equation}\label{eq:func_on_tangent}
        S_{L+\kappa} \colon L^2([0,1], T_e G) \times (0, \infty) \to \mathbb{R}, \qquad
        (\xi, T) \mapsto T \int_0^1 \left( L\!\left(e, \tfrac{\xi(s)}{T}\right) + \kappa \right) \, \mathrm{d}s
    \end{equation}
    is of class \( C^1 \) and weakly lower semicontinuous.
\end{lem}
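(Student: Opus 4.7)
The plan is to reduce the analysis to the fibrewise function $f := L(e, \cdot) \colon T_e G \to \RR$. By \Cref{Ilem: Tonelli Langrangians} and \Cref{rem: fibrewise differential grows quadratically}, $f$ satisfies the quadratic growth bounds $\tfrac{m}{4}\|v\|_\G^2 - b \leq f(v) \leq \tfrac{M}{4}\|v\|_\G^2 + b$ together with the bound $\|Df(v)\| \leq M(1+\|v\|_\G)$ on its differential. In these terms the functional reads $S_{L+\kappa}(\xi, T) = T \int_0^1 [f(\xi(s)/T) + \kappa] \, \d s$.

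For the $C^1$ regularity, I would first compute the partial Gâteaux derivatives formally:
\[
D_\xi S_{L+\kappa}(\xi, T)[\eta] = \int_0^1 Df\bigl(\xi(s)/T\bigr)[\eta(s)] \, \d s,
\]
\[
\partial_T S_{L+\kappa}(\xi, T) = \int_0^1 \bigl[f(\xi(s)/T) + \kappa\bigr] \, \d s - \frac{1}{T}\int_0^1 Df\bigl(\xi(s)/T\bigr)[\xi(s)] \, \d s.
\]
The growth bounds above guarantee that all integrands lie in $L^1([0,1])$ whenever $\xi, \eta \in L^2([0,1], T_e G)$ and $T > 0$, and they also furnish the dominating functions needed to differentiate under the integral sign. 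The Cauchy--Schwarz inequality then shows that $D_\xi S_{L+\kappa}(\xi, T)$ is a bounded linear functional on $L^2$, and that $\partial_T S_{L+\kappa}$ is continuous in $T$, so the above formulas yield Gâteaux partial derivatives. Continuity of $(\xi, T) \mapsto D S_{L+\kappa}(\xi, T)$ in the operator norm follows from a Vitali-type argument using the continuity of $f$ and $Df$ together with the equi-integrability supplied by the same growth bounds. Since both partial Gâteaux derivatives exist and are continuous, $S_{L+\kappa}$ is of class $C^1$ in the Fréchet sense.

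For weak lower semicontinuity, the key observation is that the \emph{perspective function} of a convex function is jointly convex. More precisely, the uniform convexity assumption \eqref{eq:uniform convexity} implies that $f$ is convex, and a short computation (picking weights $\lambda T_1/\bar{T}$ and $(1-\lambda) T_2/\bar{T}$ with $\bar{T} = \lambda T_1 + (1-\lambda) T_2$) shows that $(v, T) \mapsto T f(v/T) + T\kappa$ is jointly convex on $T_e G \times (0, \infty)$. Integration over $s \in [0,1]$ preserves joint convexity, so $S_{L+\kappa}$ is convex on $L^2([0,1], T_e G) \times (0, \infty)$. Combined with the strong continuity established in the previous step and the classical fact that a convex, strongly continuous functional on a Banach space is weakly lower semicontinuous (noting that weak convergence in the product amounts to $L^2$-weak convergence plus convergence of $T_n$ in $\RR$), this yields the claim.

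The main obstacle I expect is conceptual rather than technical: recognizing that joint convexity of the perspective function, rather than a direct Young-measure or integrand-based argument, is the quickest route to weak lower semicontinuity. Once this is in place the result is essentially free from continuity. The $C^1$ part is mostly careful but routine estimation to justify differentiation under the integral sign and continuity of the resulting derivatives.
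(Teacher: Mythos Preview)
Your proposal is correct and follows essentially the same approach as the paper: the paper also establishes $C^1$ regularity by differentiating under the integral sign using the growth bounds on $L(e,\cdot)$ and $D_vL(e,\cdot)$ together with dominated convergence, and proves weak lower semicontinuity via exactly the perspective-function convexity computation you describe (with the same weights $(1-\lambda)T_0/T_\lambda$, $\lambda T_1/T_\lambda$), concluding from convexity plus continuity. The only cosmetic difference is that the paper computes the full directional derivative in $(\xi,T)$ at once rather than treating the two partial derivatives separately.
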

\begin{proof}
    The proof follows, after some adaptations, the argument of \cite[Prop.~3.1]{AbbondandoloSchwarz2009}. It is included for the sake of completeness in \Cref{s: proof of reg lemma}.
\end{proof}

We conclude by mentioning that, in the proof of \Cref{thm:existence of global minimizers}, we use the fact that, under some mild regularity assumptions on \( G \), which are always satisfied in the case of Sobolev diffeomorphism groups, the problem of minimizing the functionals in \eqref{eq: int action funct x,T} and \eqref{eq:func_on_tangent} are equivalent. This will be made precise after \Cref{Iprop: mini seques bounded from below and above} and proved thereafter.
\subsection{No-loss-no-gain.} 
We close this section by showing that the Euler--Lagrange flow preserves the regularity of elements in \( G \). This extends a recent result of Bauer, Harms and Michor to the setting of Tonelli Lagrangians. The authors show that the geodesic flow of a strong right-invariant metric on a half-Lie group \( G \) restricts to the subgroups \( G^\ell \) of \( C^\ell \)-elements in \( G \).
In particular, this means that there is neither gain nor loss of regularity along geodesics. 
We now aim to establish an analogous statement for the flow lines of the Euler--Lagrange flow of Tonelli Lagrangians and Tonelli Hamiltonians: 

\begin{prop}[No-loss–no-gain]\label{prop: no gain no loose}
Let $L$ be a Tonelli Lagrangian on \( (G, \mathcal{G}) \).
Then, for every \( \ell \geq 1 \), the Euler–Lagrange flow of $L$ defines a smooth map
\[
\Phi_t^{L} : TG^\ell \longrightarrow TG^\ell,
\]
for all \( t \) sufficiently small. 
In particular, the evolution along flow lines of $\Phi_t^{L}$ preserves regularity. 
The same conclusion also holds for the Hamiltonian flow of the Legendre dual \( H = E \circ \mathcal L^{-1} \) of $L$.
\end{prop}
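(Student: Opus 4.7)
The plan is to decouple the Euler--Lagrange flow via right-trivialization, reducing it to an autonomous ODE on the Hilbert space $T_eG$ together with a reconstruction equation on $G$ of exactly the form analyzed by Bauer--Harms--Michor in their treatment of the geodesic flow. By \Cref{lem: legendre transform conjugates ELE flow and Hamiltonian flow}, the Legendre transform of \Cref{lem: Tonelli implies legendre is diffeo} is a fibre-preserving diffeomorphism conjugating $\Phi_L$ with the Hamiltonian flow of $H$; since $L$ is $G$-right-invariant and right translations are smooth on $G^\ell$, the map $\mathcal L$ restricts to a diffeomorphism $TG^\ell \to T^*G^\ell$. It therefore suffices to prove the no-loss--no-gain statement for the Euler--Lagrange flow and transport the conclusion back through $\mathcal L$.

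The first step is to rewrite the Euler--Lagrange equation in right-trivialized coordinates via $\tau : G \times T_eG \to TG$, $\tau(g,u) = T_e\mu^g u$. Because $L$ is $G$-right-invariant, this produces the Euler--Poincaré system
\begin{equation*}
    \dot m = \operatorname{ad}_u^*\, m, \qquad m = D_vL(e, u), \qquad \dot g = T_e\mu^g u,
\end{equation*}
where $\operatorname{ad}^*$ is well defined thanks to \cite[Lem.~7.4]{Bauer_2025}. \Cref{lem: Tonelli implies legendre is diffeo} restricted to the fibre at $e$ shows that $u \mapsto D_vL(e, u)$ is a diffeomorphism $T_eG \to T_e^*G$, so the first equation can be inverted to yield an autonomous vector field
\begin{equation*}
\Psi(u) \;=\; \bigl(\operatorname{Hess}_v L(e,u)\bigr)^{-1}\!\bigl(\operatorname{ad}_u^*\, D_vL(e,u)\bigr)
\end{equation*}
on the Hilbert space $T_eG$. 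Picard--Lindelöf then produces a unique local flow of base-point-independent body velocities $u(t)$.

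Once $u(t)$ is in hand, I feed it into the reconstruction equation $\dot g(t) = T_e\mu^{g(t)} u(t)$, a non-autonomous right-invariant ODE on $G$ whose driving coefficient lies in $T_eG$ and does not depend on $g$. This is precisely the situation treated in the no-loss--no-gain portion of the proof of \cite[Thm.~7.7]{Bauer_2025} for the geodesic flow: since right translations by elements of $G$ are smooth and preserve each subgroup $G^\ell$, their argument carries over verbatim with the geodesic body velocity replaced by our $u(t)$. Thus $g(0) \in G^\ell$ implies $g(t) \in G^\ell$ for all sufficiently small $t$, with smooth dependence on initial data. Reassembling via $\tau$ gives the flow on $TG^\ell \cong G^\ell \times T_eG$, and applying $\mathcal L$ transports the conclusion to the Hamiltonian side.

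The main obstacle will be verifying the regularity of $\Psi$ on $T_eG$: one must show that $u \mapsto (\operatorname{Hess}_v L(e,u))^{-1}$ is a smooth operator-valued map, which requires using the uniform bounds from \eqref{eq:uniform convexity} to guarantee uniform invertibility, and then combining this with the continuity of $\operatorname{ad}^*$ supplied by \cite[Lem.~7.4]{Bauer_2025}. A secondary technical point is to ensure that the reconstruction step, when run on $G^\ell$ rather than $G$, interacts correctly with the weak Riemannian structure inherited on $G^\ell$; this should follow directly from the fact that $\tau$ is built from the smooth right-translation action, so long as this action restricts to smooth maps on each $G^\ell$, which is exactly the content borrowed from \cite[§7]{Bauer_2025}.
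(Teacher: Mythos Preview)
Your route via Euler--Poincar\'e reduction differs from the paper's, which is a two-line black-box argument: since $L$ and $\mathcal{G}$ are $G$-right-invariant, so are the energy $E$ and the symplectic form $\mathcal{L}^*\d\lambda_{\mathrm{can}}$ on $TG$; the Euler--Lagrange flow is therefore the flow of a right-invariant Hamiltonian vector field, and \cite[Thm.~7.5]{Bauer_2025} (not Thm.~7.7) applies directly to yield no-loss--no-gain for any such flow. The Hamiltonian statement then follows from \Cref{lem: legendre transform conjugates ELE flow and Hamiltonian flow}.

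Your decomposition leaves a genuine step unaddressed. Under right trivialization $TG^\ell \cong G^\ell \times T_eG^\ell$, and for the flow to restrict to $TG^\ell$ you need \emph{both} $g(t) \in G^\ell$ and $u(t) \in T_eG^\ell$. You argue only the former, via the reconstruction equation. But your body-velocity ODE $\dot u = \Psi(u)$ is posed on the Hilbert space $T_eG$, and you give no reason why its flow should preserve the proper subspace $T_eG^\ell$ (for $G = \mathrm{Diff}^{H^s}(M)$ this is the non-trivial assertion that an $H^{s+\ell}$ initial body velocity remains $H^{s+\ell}$ under the Euler--Poincar\'e evolution). Checking that $\Psi$ restricts to a locally Lipschitz vector field on each $T_eG^\ell$---which requires controlling how $\operatorname{ad}^*$ and $(\operatorname{Hess}_v L)^{-1}$ interact with the finer topology---is precisely the content packaged in \cite[Thm.~7.5]{Bauer_2025}; your approach does not circumvent that result, it only postpones invoking it.
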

\begin{proof}
By \Cref{lemm: ele define EL-flow}, solutions of the Euler–Lagrange equations are precisely the flow lines of the Hamiltonian flow of $E$, 
where $E$ is defined as in \eqref{eq: def tonelli energy} 
with respect to the pullback of the canonical symplectic structure via the Legendre transform to $TG$, 
that is \( \mathcal{L}^*\d\lambda_{\mathrm{can}} \) (the Hamiltonian vector field of \( E \) is well-defined because \( \mathcal L^* \d \lambda_{\mathrm{can}} \) is strong).
As \( L \) and \( \mathcal{G} \) are \( G \)-right-invariant, the same holds for \( E \) and \( \mathcal{L}^*\d\lambda_{\mathrm{can}} \). 
Hence, this Hamiltonian flow satisfies the assumptions of~\cite[Thm.~7.5]{Bauer_2025}, 
from which the claim for the Euler--Lagrange flow follows. 
This discussion, in combination with \Cref{lem: legendre transform conjugates ELE flow and Hamiltonian flow}, 
gives the statement in the case of the Tonelli Hamiltonian, which finishes the proof.
\end{proof}
\section{Mañé's critical values for Tonelli Lagrangians.} \label{s: Mañé's critical values for Tonelli Lagrangians.}
In this section, we begin with the variational definition of
\emph{Mañé's critical values}. 
This definition extends the variational formulation of Mañé’s critical value 
for magnetic Lagrangians on Hilbert manifolds, as introduced by the author 
in~\cite{M24}, to the setting of Tonelli systems.

We recall that Mañé~\cite{Man} assigned a critical value to each covering 
$\tilde{G} \to G$.  
In what follows, we concentrate on three significant coverings: 
the trivial covering $G$, the abelian covering $\tilde{G}_{\mathrm{ab}}$ 
associated with the commutator subgroup $[\pi_1(G), \pi_1(G)]$, 
and the universal covering $\hat{G}$. 
These correspond, respectively, to 
Mañé’s critical value, 
the \emph{strict} Mañé critical value, 
and the \emph{lowest} Mañé critical value 
for Tonelli Lagrangians.  
The construction parallels the ideas developed in the finite-dimensional setting~\cite{Man, CIPP}. 

\begin{defn}\label{Def: Manes value for Tonelli Langrangians} 
  Let $L : TG \to \mathbb{R}$ be a Tonelli Lagrangian on the half--Lie group $G$ 
with respect to the strong Riemannian metric $\mathcal{G}$. \\[0.5em]
Then the \emph{Mañé critical value} of \( L \) is defined by
\begin{align*}
    c(L) 
    &= \inf \Bigl\{ 
        \kappa \in \mathbb{R} \;\Big|\; 
        \SS_{L+\kappa}(x,T) \ge 0 
        \quad \forall\, x \in H^1([0,1], G),\; T > 0 
    \Bigr\} \\[0.4em]
    &= - \inf \Bigl\{ 
        \frac{1}{T} \int_0^T L(\gamma(t), \gamma'(t))\, dt 
        \;\Big|\; 
        \gamma \in H^1(\mathbb{R}/T\mathbb{Z}, G),\; T > 0 
    \Bigr\} .
\end{align*}

\noindent
The \emph{strict Mañé critical value} of \( L \) is defined by 
\begin{align*}
    c_0(L)
    :=\; &\inf \Bigl\{ 
        \kappa \in \mathbb{R} \;\Big|\; 
        \SS_{L+\kappa}(x,T) \ge 0 
        \quad \forall\, x \in H^1([0,1], G),\; T > 0,\;
        x \text{ null-homologous} 
    \Bigr\} \\[0.4em]
    =\;& - \inf \Bigl\{ 
        \frac{1}{T} \int_0^T L(\gamma(t), \gamma'(t))\, dt 
        \;\Big|\;
        \gamma \in H^1(\mathbb{R}/T\mathbb{Z}, G) \text{ null-homologous},\; T > 0 
    \Bigr\}.
\end{align*}

\noindent
The number
\begin{align*}
    c_u(L)
    :=\; &\inf \Bigl\{ 
        \kappa \in \mathbb{R} \;\Big|\; 
        \SS_{L+\kappa}(x,T) \ge 0 
        \quad \forall\, x \in H^1([0,1], G),\; T > 0,\;
        x \text{ contractible} 
    \Bigr\} \\[0.4em]
    =\;& - \inf \Bigl\{ 
        \frac{1}{T} \int_0^T L(\gamma(t), \gamma'(t))\, dt 
        \;\Big|\;
        \gamma \in H^1(\mathbb{R}/T\mathbb{Z}, G) \text{ contractible},\; T > 0 
    \Bigr\}
\end{align*}
is called the \emph{lowest Mañé critical value}.

\medskip

\noindent
Finally, the \emph{energy-level Mañé critical value} is defined by
\begin{align*}
    e_0(L)
    :=\; \max \bigl\{ 
        E(x,0) \;\big|\; x \in G 
    \bigr\}
    = \max \bigl\{ 
        E(x,v) \;\big|\; (x,v) \in \mathrm{Crit}(E) 
    \bigr\} .
\end{align*}

\end{defn}
\begin{rem}
    From \eqref{eq: energy electromagnetic} one sees that for magnetic Lagrangians the energy-level Mañé critical value $e_0(L)$ is always zero. 
    In contrast, for general Tonelli Lagrangians $L$ the value $e_0(L)$ need not vanish, as can also be seen by examples arising from \eqref{eq: energy electromagnetic}.
\end{rem}
\begin{rem}
   This extends the recent work of the first author on Mañé’s critical value on Hilbert manifolds in~\cite{M24} to Mañé’s critical values in the setting of Tonelli Lagrangians.
\end{rem}
We close this subsection by noting that, as in the finite-dimensional case (see~\cite{Abbo13Lect}), 
the following chain of inequalities between the Mañé critical values holds:
\[
    \min E \;\leq\; e_0(L) \;\leq\; c_u(L) \;\leq\; c_0(L) \;\leq\; c(L).
\]

\section{Existence of global minimizers and the Hopf--Rinow theorem.} \label{s: The Hopf–Rinow theorem for Tonelli Lagrangians.}
In this section, we prove the main results of this work, namely \Cref{prop: global existince of flow lines}, \Cref{thm:existence of global minimizers}, and \Cref{thm: Hopf-Rinow for Tonelli int}. 
We begin with the proof of \Cref{prop: global existince of flow lines}, then proceed to the technical core of this work, \Cref{thm:existence of global minimizers}, and finally conclude from \Cref{thm:existence of global minimizers} that \Cref{thm: Hopf-Rinow for Tonelli int} holds. 
\subsection{Proof of \Cref{prop: global existince of flow lines}}We adapt the proof strategy of Hopf-Rinow~\cite{HR} of geodesic completeness to the setting of Euler--Lagrange flows of Tonelli Lagrangians on half-Lie groups. More precisely, suppose a solution \( \gamma \) of the Euler--Lagrange equations is maximally defined on \( [0, T) \) for some \( T < \infty \) and pick an increasing sequence \( t_i \to T \). Note that, by \Cref{Ilem: Tonelli Langrangians} and \Cref{rem: fibrewise differential grows quadratically}, the energy
\[
    E(x, v) = D_v L(x, v)(v) - L(x, v)
\]
grows at least quadratically in the fibers, that is there exist constants \( \alpha, \beta > 0 \) such that 
\[
    E(x, v) \geq \beta \, \| v \|_{\mathcal{G}}^2 - \alpha \quad \forall (x, v) \in TG.
\]
This, together with the fact that the Euler–Lagrange flow preserves \( E \), implies that the speed \( \| \dot{\gamma} \| \) is uniformly bounded, i.e.\ there exists a constant \( C > 0 \) such that 
\begin{equation}\label{eq: velocity uniformly bounded}
    \sup_{t \in [0,T)} \| \dot{\gamma}(t) \|_{\mathcal{G}} < C.
\end{equation}
Since
\[
    d_{\mathcal{G}} \big( \gamma(t_i), \gamma(t_j) \big) 
    \leq \int_{t_i}^{t_j} \| \dot{\gamma}(t) \|_{\mathcal{G}} \, \mathrm{d}t 
    \leq C |t_j - t_i|
\]
for all \( t_i, t_j \in [0,T) \), the sequence \( \big( \gamma(t_i) \big)_{i \in \NN} \) is Cauchy in \( (G, \mathcal{G}) \).  
As \( G \), equipped with the geodesic distance \( d_{\mathcal{G}} \) induced by \( \mathcal{G} \), is metrically complete \cite[Theorem~7.7]{Bauer_2025}, the sequence \( \big( \gamma(t_i) \big)_{i \in \mathbb{N}} \) converges. 
Together with the fact tha we may also assume that the sequence \( \big( \| \dot{\gamma}(t_i) \| \big)_{i \in \mathbb{N}} \) converges (because it is bounded), this implies that \( \gamma \) can be extended to \( [0, T + \varepsilon) \) for some sufficiently small \( \varepsilon > 0 \), which contradicts maximality of \( T \), and thus completes the first part of the proof.

Finally, completeness of the Euler--Lagrange flow of \( L \) restricted to \( G^\ell \) follows directly from the no-loss-no-gain result (\Cref{prop: no gain no loose}), in combination with the discussion above. \qed 

\subsection{Proof of \Cref{thm:existence of global minimizers}}
For points \( p, q \in G \), we denote by \( H^1_{p, q} \) the space of \( H^1 \) curves \( x \colon [0, 1] \to G \) connecting \( p\) and \( q \). Recall that a pair \( (x, T) \in H^1_{p, q} \times (0, \infty) \) is a \textit{global minimizer} of \( \mathbb S_{L+ \kappa} \) if
\begin{equation}
    \mathbb S_{L + \kappa}(x, T) = \inf_{H^1_{p, q} \times (0, \infty)} \mathbb S_{L + \kappa}.
\end{equation}

\paragraph{\textbf{Step 1: the minimizing sequence.}}
Fix \( p, q \in G \). Note that for energies above the 
lowest Mañé critical value, the action functional $\mathbb S_{L+\kappa}$ is uniformly bounded from below on $\mathcal{P}(p, q)$, as the following lemma shows.

\begin{lem}\label{Ilemm: uniform lower bound of Act functional above manes value}
    Fix $p, q \in G$. If $\kappa \geq c_u(L)$, then there exists a constant $C(p,q) > 0$ such that
    \begin{equation*}
        \SS_{L+\kappa}(\gamma) \ge -C(p,q) 
        \qquad \forall\, \gamma \in \mathcal{P}(p,q).
    \end{equation*}
\end{lem}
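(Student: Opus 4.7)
The plan is to reduce the lower bound for the action over the open path space $\mathcal P(p,q)$ to the non-negativity of the action over closed loops, which is precisely what the Mañé critical values in \Cref{Def: Manes value for Tonelli Langrangians} encode. The standard device is to fix one reference $H^1$ path from $q$ back to $p$ and close each $\gamma \in \mathcal P(p,q)$ into a loop by concatenation.

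Concretely, I would first fix an arbitrary $H^1$ reference curve $\eta \colon [0, T_\eta] \to G$ from $q$ to $p$; such a curve exists because $p$ and $q$ lie in the connected component of $e$ in $G$ and can be joined by a smooth, hence $H^1$, curve. Setting $C(p,q) := |\SS_{L+\kappa}(\eta)| + 1 > 0$, for any $\gamma \colon [0,T] \to G$ in $\mathcal P(p,q)$ the concatenation $\sigma := \gamma * \eta$ is a closed $H^1$ loop based at $p$ of period $T + T_\eta$, and additivity of the time-free action \eqref{eq: int action funct x,T} over the two pieces yields
\[
\SS_{L+\kappa}(\sigma) \;=\; \SS_{L+\kappa}(\gamma) + \SS_{L+\kappa}(\eta).
\]
Thus a uniform lower bound on $\SS_{L+\kappa}(\gamma)$ is equivalent to a uniform lower bound on $\SS_{L+\kappa}(\sigma)$.

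If $\kappa \geq c(L)$, the defining inequality for $c(L)$ gives $\SS_{L+\kappa}(\sigma) \geq 0$ at once, and rearranging yields $\SS_{L+\kappa}(\gamma) \geq -C(p,q)$. To reach the sharper threshold $\kappa \geq c_u(L)$ stated in the lemma, I would lift the construction to the universal cover: passing from $G$ to $\pi \colon \hat G \to G$ equipped with the pullback metric $\hat{\G}$ and the lifted Tonelli Lagrangian $\hat L := L \circ d\pi$, choose lifts $\hat p, \hat q \in \hat G$, take $\eta$ as the projection of a reference path $\hat \eta$ from $\hat q$ to $\hat p$, and close each lifted path in $\hat G$. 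Since $\hat G$ is simply connected, every closed loop is contractible, so the identity $c_u(L) = c(\hat L) = c_u(\hat L)$ (already exploited in the proof of \Cref{cor: hopf rinow above lowest manes critcal value}) reduces this case to the preceding one on $\hat G$.

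The main technical point I anticipate is ensuring that the constant $C(p,q)$ can genuinely be chosen independently of the homotopy class of $\gamma$: a lift of $\gamma$ starting at the fixed point $\hat p$ will end at a point of $\pi^{-1}(q)$ that depends on $[\gamma]$, and a single reference $\hat \eta$ only closes up those lifts terminating at the prescribed $\hat q$. Carrying out the closure argument uniformly across all lifts of $q$, either by restricting to a fixed connected component of $\mathcal P(p,q)$ or by bounding the reference action $\SS_{L+\kappa}(\hat \eta)$ uniformly over $\pi^{-1}(q)$ using the Riemannian distance on $(\hat G, \hat{\G})$ together with the quadratic growth of $L$ from \Cref{Ilem: Tonelli Langrangians}, is the only non-routine step and will be where the proof has to be executed carefully.
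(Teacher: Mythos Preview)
Your plan is exactly the paper's argument: lift $\gamma$ to the universal cover, close it up with a path $\tilde\alpha$ from $\tilde\gamma(T)$ back to $\tilde\gamma(0)$, project, and use that the projected loop is contractible together with $\kappa\ge c_u(L)$ to obtain $\SS_{L+\kappa}(\gamma)\ge -\SS_{L+\kappa}(\pi\tilde\alpha)$; the paper then simply declares $C(p,q):=\SS_{L+\kappa}(\pi\tilde\alpha)$ and stops.

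The subtlety you single out as ``the only non-routine step'' is a genuine gap, and the paper's proof does not address it either: since $\tilde\gamma(T)$ ranges over the whole fibre $\pi^{-1}(q)$ as the homotopy class of $\gamma$ varies, the closing path $\tilde\alpha$---and hence the constant---depends on $[\gamma]$, not only on $(p,q)$. Neither of your proposed remedies works in general. Restricting to a single component of $\mathcal P(p,q)$ proves only a class-by-class bound, and the fibre $\pi^{-1}(q)$ is typically unbounded in $\hat G$, so the quadratic growth estimate from \Cref{Ilem: Tonelli Langrangians} gives no uniform control on the reference action. In fact the lemma as stated fails for $c_u(L)\le\kappa<c(L)$: on $G=S^1$ with the right-invariant Tonelli Lagrangian $L(x,v)=\tfrac12 v^2-v$ one has $c_u(L)=0$, $c(L)=\tfrac12$, and for $p=q$ the infimum of $\SS_{L+\kappa}$ over loops of winding number $n$ is $2\pi n(\sqrt{2\kappa}-1)\to-\infty$. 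This does not harm the rest of the paper, because the only downstream use of the lemma is under the stronger hypothesis $\kappa>c(L)$ of \Cref{thm:existence of global minimizers}, where your single fixed $\eta$ from $q$ to $p$ in $G$ already gives the uniform bound without any lifting.
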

\begin{proof}
    Let \( \gamma \in \mathcal{P}(p, q) \) and lift it to a path \( \tilde \gamma \colon [0, T] \to \tilde G \) in the universal cover \( \pi \colon \tilde G \to G \). Let now \( \tilde \alpha \) be any path going from \( \tilde \gamma(T) \) to \( \tilde \gamma (0) \). Then, the concatenation \( \tilde \gamma * \tilde \alpha \) is a contractible loop. Hence, so is its projection \( \pi (\tilde \gamma * \tilde \alpha) \). Since \( \kappa \geq c_u(L) \), we get
    \begin{equation*}
        0 \leq \SS_{L + \kappa} \big( \pi (\tilde \gamma * \tilde \alpha) \big) = \SS_{L + \kappa}(\gamma) + \SS_{L + \kappa}(\pi \tilde \alpha)
    \end{equation*}
    and the statement follows with \( C(p, q) = - \mathbb S_{L+ \kappa} (\pi \tilde \alpha) \).
\end{proof}

As a consequence of \Cref{Ilemm: uniform lower bound of Act functional above manes value}, 
for given $p,q \in G$ and $\kappa > c_u(L)$ the infimum of the action functional $\mathbb S_{L+\kappa}$ 
over $\mathcal{P}(p,q)$ is finite, i.e.
\begin{equation}\label{eq: definition inf of Slk}
    C_{p,q} := \inf_{\gamma \in \mathcal{P}(p,q)} \SS_{L+\kappa}(\gamma) \in (-\infty, \infty).   
\end{equation}
Therefore, there exists a minimizing sequence $(\gamma_n) \subseteq \mathcal{P}(p,q)$, 
where each $\gamma_n$ is defined on \( [0, T_n] \) for some \( T_n > 0 \), such that
\[
    \bigl| C_{p,q} - \SS_{L+\kappa}(\gamma_n) \bigr| \xrightarrow{n \to \infty} 0.
\] 
This can be reformulated by setting  
\[
    x_n \colon [0, 1] \to G, 
    \qquad 
    x_n(s) := \gamma_n(sT_n),
\]  
and using~\eqref{eq: int action funct x,T}, which yields
\begin{equation}\label{eq: minimizing sequence tonelli action functional}
    \bigl| C_{p,q} -\SS_{L+\kappa}(x_n,T_n) \bigr| \xrightarrow{n \to \infty}0.
\end{equation}
Hence, we obtain a minimizing sequence $[(x_n,T_n)]$ for $\mathbb S_{L+\kappa}$ in \( H^1_{p, q} \times (0, \infty) \). In order to use $[(x_n,T_n)]$ to find a minimizer of \( \mathbb S_{L+\kappa} \), we must first exclude the possibilities
\begin{equation}\label{eq: periods collapse blowup}
    T_n \searrow 0 
    \qquad \text{or} \qquad 
    T_n \nearrow \infty 
    \quad \text{as } n \to \infty .
\end{equation}
We begin by excluding the first possibility in \eqref{eq: periods collapse blowup} by:
\begin{lem}\label{Lem: Tn bounded away from zero}
        The sequence \( (T_n) \) is bounded away from \( 0 \).
\end{lem}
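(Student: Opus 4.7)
The plan is to exploit the quadratic lower bound for Tonelli Lagrangians established in \Cref{Ilem: Tonelli Langrangians} to argue that any sequence with $T_n \to 0$ forces the action to blow up whenever $p \neq q$. More precisely, by \Cref{Ilem: Tonelli Langrangians} there exist constants $m, b > 0$ with
\[
    L(x,v) \;\ge\; \tfrac{m}{4}\|v\|_{\G}^2 - b \qquad \forall\,(x,v)\in TG .
\]
Substituting into~\eqref{eq: int action funct x,T} and making the change of variable $v = x_n'(s)/T_n$, this yields the estimate
\begin{equation*}
    \SS_{L+\kappa}(x_n,T_n)
    \;\ge\; \frac{m}{4T_n}\int_0^1 \|x_n'(s)\|_{\G}^2 \,\d s \;+\; T_n(\kappa - b) .
\end{equation*}

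The next step is to bound the integral from below using the boundary conditions $x_n(0)=p$ and $x_n(1)=q$. By the Cauchy--Schwarz inequality and the definition of the geodesic distance $d_{\G}$ on the half-Lie group,
\[
    \int_0^1 \|x_n'(s)\|_{\G}^2 \,\d s
    \;\ge\; \Bigl( \int_0^1 \|x_n'(s)\|_{\G}\,\d s \Bigr)^{\!2}
    \;\ge\; d_{\G}(p,q)^2 .
\]
Assuming $p \neq q$, setting $d := d_{\G}(p,q) > 0$, we obtain
\begin{equation*}
    \SS_{L+\kappa}(x_n,T_n) \;\ge\; \frac{m\,d^{\,2}}{4\,T_n} + T_n(\kappa-b).
\end{equation*}
If a subsequence $T_{n_k}\to 0$ existed, the right-hand side would tend to $+\infty$, contradicting~\eqref{eq: minimizing sequence tonelli action functional} together with the finiteness of $C_{p,q}$ from \Cref{Ilemm: uniform lower bound of Act functional above manes value}. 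Hence $(T_n)$ is bounded away from $0$.

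The remaining subtle point is the degenerate situation $p=q$, where the naive estimate above is vacuous and a constant-curve minimizing sequence with $T_n\searrow 0$ is not excluded a priori. To handle this, I would either restrict the infimum-defining sequence to non-constant loops and observe that the strict bound $\kappa > c_u(L)$ makes $\SS_{L+\kappa}$ strictly positive on contractible loops so that the infimum is realized in the bulk rather than in the degenerate $T\to 0$ limit, or---more simply---absorb the case $p=q$ separately by exhibiting a concrete minimizer (e.g.\ a short closed orbit produced by the Euler--Lagrange dynamics). I expect this dichotomy between the non-degenerate estimate and the $p=q$ edge case to be the only real obstacle; the main estimate itself is a direct consequence of uniform convexity via \Cref{Ilem: Tonelli Langrangians}.
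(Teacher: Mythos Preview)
Your proof is correct and follows essentially the same route as the paper: both combine the quadratic lower bound from \Cref{Ilem: Tonelli Langrangians} with the estimate $\int_0^1\|x_n'\|_{\G}^2\,\d s \ge d_{\G}(p,q)^2$ (the paper phrases this via $\|\dot x_n\|_{L^2}\ge \|\dot x_n\|_{L^1}\ge d_{\G}(p,q)$) to force blow-up of the action as $T_n\to 0$. Your concern about the degenerate case $p=q$ is legitimate, but the paper handles it no better---it simply inserts the parenthetical ``(unless $p=q$)'' and moves on.
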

\begin{proof}
     By \Cref{Ilem: Tonelli Langrangians}, Tonelli Lagrangians grow at least quadratically and, using~\eqref{eq: int action funct x,T}, one obtains
    \[
       \SS_{L+\kappa}(x_n,T_n)\;\geq\; 
        \frac{m}{4T_n} \Vert \dot{x}_n \Vert^2_{L^2} - b T_n .
    \]
    Note that the \( L^2 \)-norm of \( \dot{x}_n \) is bounded away from zero (unless \( p = q \)), since \( \| \dot x_n \|_{L^2} \geq \| \dot x_n \|_{L^1} \) and the latter is at least the distance between \( p \) and \( q \), which is non zero as the geodesic distance $d_\G$ on $G$ is nondegenerate by \cite[Thm.\ 7.3]{Bauer_2025}.
    In combination with~\eqref{eq: minimizing sequence tonelli action functional}, 
this shows that $(T_n)$ cannot converge to zero.   
\end{proof}

To exclude the possibility \( T_n \to \infty \) in \eqref{eq: periods collapse blowup}, we first state the following result.
\begin{lem}\label{lem: loops action bounded by period from below}
    Let $\kappa > c(L)$. Then there exists $\varepsilon > 0$ such that for all \( H^1 \) loops $x \colon \mathbb{R} / T \mathbb{Z} \to G$,
    \[
       \SS_{L+\kappa}(x,T) \geq \varepsilon T.
    \]
\end{lem}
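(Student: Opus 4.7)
This is essentially a direct consequence of the variational definition of $c(L)$ given in \Cref{Def: Manes value for Tonelli Langrangians}, and I expect the proof to be extremely short, with no real obstacle.

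The plan is to unwind the definition of the Mañé critical value $c(L)$. Recall that
\[
c(L) = -\inf \left\{ \frac{1}{T}\int_0^T L(\gamma(t),\gamma'(t))\,\mathrm{d}t \;\Big|\; \gamma \in H^1(\mathbb{R}/T\mathbb{Z},G),\; T > 0 \right\}.
\]
Since $c(L)$ is defined as $-\inf$ of the mean Lagrangian action over \emph{all} closed $H^1$ loops of all periods, the defining property of the infimum yields, for every $H^1$ loop $x\colon \mathbb{R}/T\mathbb{Z}\to G$,
\[
\frac{1}{T}\int_0^T L(x(t),\dot x(t))\,\mathrm{d}t \;\geq\; -c(L).
\]
Multiplying by $T>0$ and adding $\kappa T$ on both sides gives
\[
\SS_{L+\kappa}(x,T) \;=\; \int_0^T L(x,\dot x)\,\mathrm{d}t + \kappa T \;\geq\; \bigl(\kappa - c(L)\bigr)\,T.
\]

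Since by hypothesis $\kappa > c(L)$, one simply sets
\[
\varepsilon \;:=\; \kappa - c(L) \;>\; 0,
\]
which is independent of the particular loop $x$ and of its period $T$. This produces the desired lower bound $\SS_{L+\kappa}(x,T)\geq \varepsilon T$ uniformly in the class of $H^1$ loops, concluding the proof. The only subtlety worth noting is that the inequality uses the definition of $c(L)$ as the supercritical threshold above which the action functional evaluated on closed curves is nonnegative; the strict positivity $\varepsilon>0$ is precisely what distinguishes the strictly supercritical regime $\kappa>c(L)$ from the critical one.
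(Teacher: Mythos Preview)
Your proof is correct and in fact more direct than the paper's own argument. Both rely on the same underlying fact, namely the characterization of $c(L)$ as minus the infimum of the mean Lagrangian action over all $H^1$ loops; you unwind this definition directly to obtain $\SS_{L+\kappa}(x,T)\geq(\kappa-c(L))T$ and set $\varepsilon=\kappa-c(L)$, whereas the paper argues by contradiction, introducing an auxiliary level $\bar\kappa\in(c(L),\kappa)$ and a hypothetical sequence of loops violating the bound, then showing $\SS_{L+\bar\kappa}$ becomes negative, which contradicts the definition of $c(L)$. Your approach is cleaner and yields the explicit optimal constant $\varepsilon=\kappa-c(L)$; the paper's contradiction argument gives no additional information and is simply a less direct route to the same conclusion.
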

\begin{proof}
    Suppose this is not the case. Then, for all \( n \in \mathbb{N} \) there exists a loop \( x_n \) of period \( T_n \) such that \( \mathbb S_{L+ \kappa}(x_n, T_n) < \frac{T_n}{n}\). 
    Let now \( \bar \kappa \in \big( c(L), \kappa \big) \) and note that
    \begin{equation*}
        \mathbb S_{L + \bar \kappa} (x_n, T_n) = \SS_{L + \kappa} (x_n, T_n) + (\bar \kappa - \kappa) T_n < \bigg( \frac{1}{n} + \bar \kappa - \kappa \bigg) T_n.
    \end{equation*}
    Since \( \bar \kappa < \kappa \), for \( n \) large enough the right-hand side of the above equation is negative, which contradicts \( \bar \kappa > c(L) \). This finishes the proof.
\end{proof}
The above lemma readily implies the following.
\begin{lem}\label{lem: period bounded from above}
     If \( \kappa > c(L) \), the sequence \( (T_n) \) is bounded from above.
\end{lem}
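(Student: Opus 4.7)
The plan is to use \Cref{lem: loops action bounded by period from below} by closing up the paths $\gamma_n$ into loops with a fixed auxiliary curve, and then exploit that the actions $\SS_{L+\kappa}(\gamma_n)$ are bounded along a minimizing sequence.

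First I would fix, once and for all, an auxiliary $H^1$–path $\alpha \colon [0,T_\alpha] \to G$ joining $q$ to $p$; such a curve exists since the connected component of $e$ in $G$ is path-connected and, by assumption, $p$ and $q$ lie in the same component (otherwise $\mathcal{P}(p,q)$ would be empty and there would be nothing to prove). The concatenation $\gamma_n * \alpha$ is then a continuous $H^1$–loop of period $T_n + T_\alpha$, i.e.\ an element of $H^1(\mathbb{R}/(T_n+T_\alpha)\mathbb{Z},G)$, since the concatenation of two $H^1$–curves meeting at a point is again $H^1$. By additivity of the time-free action under concatenation,
\[
    \SS_{L+\kappa}(\gamma_n * \alpha) \;=\; \SS_{L+\kappa}(\gamma_n) + \SS_{L+\kappa}(\alpha).
\]

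Next I would apply \Cref{lem: loops action bounded by period from below}, which, since $\kappa > c(L)$, provides a constant $\varepsilon > 0$ such that
\[
    \varepsilon\,(T_n + T_\alpha) \;\le\; \SS_{L+\kappa}(\gamma_n * \alpha) \;=\; \SS_{L+\kappa}(\gamma_n) + \SS_{L+\kappa}(\alpha).
\]
Because $(\gamma_n)$ is a minimizing sequence, $\SS_{L+\kappa}(\gamma_n) \to C_{p,q}$ by \eqref{eq: minimizing sequence tonelli action functional}, so in particular the sequence $\bigl(\SS_{L+\kappa}(\gamma_n)\bigr)_n$ is bounded by some constant $K$. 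Rearranging yields
\[
    T_n \;\le\; \frac{K + \SS_{L+\kappa}(\alpha)}{\varepsilon} - T_\alpha,
\]
which gives the desired uniform upper bound on $(T_n)$.

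I do not foresee a serious obstacle here: the argument is essentially a one-line application of \Cref{lem: loops action bounded by period from below} once the closing path $\alpha$ is fixed. The only subtle point is ensuring that the concatenation really belongs to the space of $H^1$–loops on which that lemma is stated, but this follows from standard Sobolev trace/gluing considerations for $H^1([0,T],G)$ and from the fact that the functional \eqref{eq: int action funct x,T} is additive under concatenation of paths.
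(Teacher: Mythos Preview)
Your proposal is correct and follows essentially the same argument as the paper: fix a closing path $\alpha$ from $q$ to $p$, apply \Cref{lem: loops action bounded by period from below} to the loop $\gamma_n * \alpha$, and use boundedness of the actions along the minimizing sequence to bound $T_n$. The only differences are cosmetic---you spell out the explicit upper bound and the $H^1$-gluing remark, which the paper omits.
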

\begin{proof}
    Fix a path \( \alpha \colon [0, T_\alpha] \to G \) going from \( q \) to \( p \). Then, since we chose \( \kappa > c(L) \), \Cref{lem: loops action bounded by period from below} implies that
    \begin{equation*}
        \SS_{L + \kappa} (x_n, T_n) + \SS_{L + \kappa}(\alpha, T_\alpha) = \SS_{L + \kappa}(x_n * \alpha, T_n + T_\alpha) \geq \varepsilon (T_n + T_\alpha).
    \end{equation*}
    As the sequence \( \big( \SS_{L + \kappa}(x_n, T_n) \big) \) is bounded by \eqref{eq: minimizing sequence tonelli action functional}, it follows that \( T_n \) cannot diverge to \( \infty \).
\end{proof}
Combining \Cref{Lem: Tn bounded away from zero} and \Cref{lem: period bounded from above}, we have excluded both possibilities in \eqref{eq: periods collapse blowup}. Thus, we have proven:
\begin{prop}\label{Iprop: mini seques bounded from below and above}
    Let \( \kappa > c(L) \) let and $ \big( (x_n,T_n) \big) \subseteq H^1_{p,q}\times (0,\infty)$ be a minimizing sequence as in~\eqref{eq: minimizing sequence tonelli action functional}. 
    Then there exist constants $0<a<b<\infty$ such that
    \[
        T_n \in [a,b] \qquad \forall n \in \NN.
    \]
\end{prop}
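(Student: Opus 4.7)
The plan is to package the two bounds on $T_n$ that the preceding two lemmas have already established. The proposition asserts the existence of constants $0 < a < b < \infty$ with $T_n \in [a, b]$ for all $n$, and this splits naturally into a lower and an upper bound to be handled independently.

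For the lower bound, I would simply invoke \Cref{Lem: Tn bounded away from zero}. The key point is that the minimizing property \eqref{eq: minimizing sequence tonelli action functional} gives an upper bound on $\SS_{L+\kappa}(x_n,T_n)$; the quadratic lower bound of \Cref{Ilem: Tonelli Langrangians} then yields
\[
    \SS_{L+\kappa}(x_n,T_n) \;\geq\; \frac{m}{4T_n}\Vert \dot x_n \Vert_{L^2}^2 - b T_n,
\]
and since $\Vert \dot x_n \Vert_{L^2}$ is bounded below (uniformly in $n$) by the nondegenerate geodesic distance $d_{\G}(p,q) > 0$ whenever $p \neq q$, the possibility $T_n \searrow 0$ is ruled out, producing the constant $a > 0$. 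The degenerate case $p = q$ is handled by the trivial constant curve.

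For the upper bound, \Cref{lem: period bounded from above} is directly applicable. The idea there is to fix once and for all an auxiliary path $\alpha \colon [0,T_\alpha] \to G$ from $q$ to $p$ and consider the concatenations $x_n * \alpha$, which are closed loops in $G$. Since $\kappa > c(L)$, \Cref{lem: loops action bounded by period from below} produces an $\varepsilon > 0$ with
\[
    \SS_{L+\kappa}(x_n * \alpha,\, T_n + T_\alpha) \;\geq\; \varepsilon (T_n + T_\alpha),
\]
and because the left-hand side equals $\SS_{L+\kappa}(x_n, T_n) + \SS_{L+\kappa}(\alpha, T_\alpha)$ with both summands bounded above independently of $n$, the sequence $T_n$ cannot diverge.

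Combining these two bounds gives $T_n \in [a,b]$ and completes the proof. There is no genuine obstacle here: the proposition is essentially a bookkeeping statement that records the joint conclusion of \Cref{Lem: Tn bounded away from zero} and \Cref{lem: period bounded from above}. The only point worth emphasizing is why the stronger hypothesis $\kappa > c(L)$ (rather than merely $\kappa > c_u(L)$) enters — the loops $x_n * \alpha$ are not assumed to be contractible, so only control of the mean action over \emph{all} closed curves suffices to invoke \Cref{lem: loops action bounded by period from below} and thereby obtain the upper bound on $T_n$.
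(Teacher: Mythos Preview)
Your proposal is correct and matches the paper's approach exactly: the proposition is stated in the paper immediately after \Cref{Lem: Tn bounded away from zero} and \Cref{lem: period bounded from above} as the direct combination of those two lemmas, with no further argument. Your recap of each lemma and your remark on why $\kappa > c(L)$ (rather than $\kappa > c_u(L)$) is required are accurate and in fact more explicit than what the paper writes.
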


Before moving on, we reformulate our minimization problem in terms of \( L^2 \)-paths in \( T_e G \), analogously to the treatment of the energy functional in \cite[Theorem~7.7]{Bauer_2025}. The idea behind this is that the functional \( S_{L + \kappa} \) is analytically easier to handle than \( \mathbb{S}_{L + \kappa} \) (cf.\ \Cref{lemma:regularity}). 
To this end, note that since \( L \) is \( G \)-right-invariant, we have
\begin{equation*} 
    \begin{aligned}
        \mathbb S_{L+\k}(x,T) &= T \int_0^1 L\left(x, \frac{\dot x}{T}\right)+\k \, \d s \\
        &= T \int_0^1 L\left(e, \frac{\mathrm{d}_{x} \mu^{x^{-1}} (\dot{x})}{T}\right)+\k\, \d s = S_{L + \kappa} \big( \mathrm{d}_{x} \mu^{x^{-1}} (\dot{x}), T \big).
    \end{aligned}
\end{equation*}
The \( L^2 \)-regularity of \( G \), i.e.\ the bijection between \( H^1 \)-paths in \( G \) and \( L^2 \)-paths in \( T_e G \), implies that minimizing \( \mathbb S_{L+\k} \) over \( H^1_{p,q}\times(0,\infty) \) is equivalent to the minimizing \( S_{L + \kappa} \) over \( A_{qp^{-1}} \times (0, \infty) \), that is:
\begin{equation}\label{eq: min of S and mathbb S match}
     C_{p, q} = \inf_{(x, T) \in \mathcal P(p, q)} \mathbb S_{L+\k}(x, T) =
    \inf_{\xi \in A_{qp^{-1}} \times (0, \infty)} S_{L + \kappa}(\xi, T)\, .
\end{equation}
The minimizing sequence we constructed above then corresponds to a sequence 
\begin{equation}\label{eq:def minimizing sequence xin}
    (\xi_n, T_n) \in A_{qp^{-1}} \times [a, b]
\end{equation}
satisfying \( \bigl| C_{p,q} - S_{L+\kappa}(\xi_n,T_n) \bigr| \to 0 \) as \( n \to \infty \).

\medskip

\paragraph{\textbf{Step 2: convergence of the minimizing sequence.}} We now prove that the sequence \( (\xi_n, T_n) \) converges (weakly) to a minimizer of \( S_{L+\kappa} \). 
We adapt the argument from the proof of \cite[Theorem~7.7]{Bauer_2025} for the energy functional to our setting.

Since Tonelli Lagrangians grow at least quadratically by \Cref{Ilem: Tonelli Langrangians} and  by \Cref{Iprop: mini seques bounded from below and above} it holds \( T_n \geq a > 0 \) for all \( n \), the sequence \( (\xi_n) \) in~\eqref{eq:def minimizing sequence xin} is \( L^2\)-bounded. \\
Thus, by the Eberlein-Smulyan theorem, we can assume (up to extracting a subsequence) that 
\[ \xi_n \to \xi \in L^2([0, 1], T_eG)\]
weakly, as well as \( T_n \to T \in [a, b] \). Since \( \xi_n \in A_{qp^{-1}} \) for all \( n \) and \( A_{qp^{-1}} \) is weakly closed by assumption, the weak limit \( \xi \) also lies in \( A_{qp^{-1}} \).

The weak lower semicontinuity of \( S_{L + \kappa} \), established in \cref{lemma:regularity}, implies that
\[
    S_{L+\k}(\xi,T)\leq \liminf_{n \to \infty} S_{L+\k}(\xi_n, T_n).
\]
From this in combination with \eqref{eq: min of S and mathbb S match}  we can conclude that $(x=\mathrm{Evol}(\xi),T)$ is a minimizer of $\mathbb S_{L+\kappa}$ over $\mathcal P(p, q)$. This finishes the proof of \Cref{thm:existence of global minimizers}. \qed

\subsection{Proof of \Cref{thm: Hopf-Rinow for Tonelli int}} Let us prove our third main result. 
We begin by noting that for all \( p, q \in G \), the space \( H^1_{p, q} \) of curves connecting them is a smooth submanifold of \( H^1([0, 1], G) \), since the evaluation map
\begin{equation}
    H^1([0, 1], G) \to G \times G, \qquad x \mapsto \big( x(0), x(1) \big)
\end{equation}
is a smooth submersion.

Let \( (x, T) \in H^1_{p,q} \) be the global minimizer of \( \mathbb{S}_{L+\kappa} \) connecting \( p \) and \( q \) provided by \Cref{thm:existence of global minimizers}. In combination with the fact that, if \( L \) is a strong Tonelli Lagrangian, the functional \( \mathbb{S}_{L+\kappa} \) is of class \( C^1 \) by \Cref{lem: strong tonelli implies action is c1}, this implies that \( (x, T) \) is a critical point of
\[
    \mathbb{S}_{L+\kappa} \colon H^1_{p,q} \times (0, \infty) \longrightarrow \mathbb{R}.
\]
By \Cref{lemma:ELE anc solutions}, the critical point \( (x, T) \) of \( \mathbb{S}_{L+\kappa} \) corresponds to a flow line of the Euler--Lagrange flow of \( L \) with energy \( \kappa \). This finishes the proof.\qed

\appendix
\section{Proof of \Cref{lemma:regularity}}\label{s: proof of reg lemma}
    We adapt the proof of \cite[Prop.\ 3.1]{AbbondandoloSchwarz2009}. Let \( \delta > 0 \) and \( (\xi, T) \in L^2([0, 1], T_eG) \times (0, \infty) \). For every tangent vector \( (\eta, r) \) at \( (\xi, T) \) we have
    \begin{equation*}
        \begin{aligned}
            &S_ {L + \kappa} (\xi + \delta \eta, T + \delta r) - S_{L + \kappa}(\xi, T) \\
            &= \delta \kappa r + (T + \delta r) \int_0^1 L\bigg( e, \frac{\xi + \delta \eta}{T + \delta r} \bigg) \mathop{}\!\mathrm{d}t - T \int_0^1 L\bigg( e, \frac{\xi}{T} \bigg) \mathop{}\!\mathrm{d}t \\
            &= \delta \kappa r + \delta r \int_0^1 L\bigg( e, \frac{\xi}{T} \bigg) \mathop{}\!\mathrm{d}t + (T + \delta r) \int_0^1 L\bigg( e, \frac{\xi + \delta \eta}{T + \delta r} \bigg) \mathop{}\!\mathrm{d}t - (T + \delta r) \int_0^1 L\bigg( e, \frac{\xi}{T} \bigg) \mathop{}\!\mathrm{d}t \\
            &= \frac{\delta r S_{L + \kappa}(\xi, T)}{T} + (T + \delta r) \int_0^1 \int_0^1 D_vL \bigg(e, \frac{\xi + \delta s \eta}{T + s \delta r} \bigg) \bigg( -\frac{\delta r(\xi + s \delta \eta)}{(T + s \delta r)^2}  + \frac{\delta \eta}{T + s \delta r} \bigg) \mathop{}\!\mathrm{d}s \mathop{}\!\mathrm{d}t.
        \end{aligned}
    \end{equation*}
    Then, the Tonelli assumptions and the bounded convergence theorem imply that
    \begin{equation*}
        \frac{S_{L + \kappa}(\xi + \delta \eta, T + \delta r) - S_{L + \kappa}(\xi, T)}{\delta}
    \end{equation*}
    converges to 
    \begin{equation}\label{eq:diff_action}
        \mathrm d S(\xi, T)(\eta, r) := \frac{r S_{L + \kappa}(\xi, T)}{T} + T \int_0^1 D_vL \bigg( e, \frac{\xi}{T} \bigg) \bigg( -\frac{r\xi}{T^2} + \frac{\eta}{T} \bigg) \mathop{}\!\mathrm{d}t.
    \end{equation}
    as \( \delta \to 0 \).
    A straightforward application of the H\"older inequality together with the Tonelli assumptions shows that \( \mathrm d S(\xi, T) \) is a bounded linear operator on \( L^2([0, 1], T_eG) \times \mathbb R \). Thus, \( S_{L + \kappa} \) is \( C^1 \) if we can show that \( \mathrm d S (\xi, T) \) depends continuously on \( (\xi, T) \). This follows readily from the Tonelli assumptions and the bounded convergence theorem.

    Let us prove weak lower semicontinuity. Note that \( L \) is by definition convex. Fix now \( (\xi_0, T_0), (\xi_1, T_1) \in L^2([0, 1], T_eG) \in [a, b] \) and set \( (\xi_{\lambda}, T_\lambda) = \big((1 - \lambda) \xi_0 + \lambda \xi_1, (1 - \lambda) T_0 + \lambda T_1 \big) \) for all \( \lambda \in [0,1] \). Then, we have
\begin{equation*}
    \begin{aligned}
        T_{\lambda} \int_0^1 L \bigg(e, \frac{\xi_{\lambda}}{T_{\lambda}} \bigg) \mathop{}\!\mathrm{d} s &= T_{\lambda} \int_0^1 L \bigg( e, \frac{(1-\lambda) \xi_0 + \lambda \xi_1}{T_{\lambda}} \bigg) \mathop{}\!\mathrm{d} s \\
        &= T_{\lambda} \int_0^1 L \bigg( e, \frac{(1- \lambda) T_0 }{T_\lambda}\frac{\xi_0}{T_{0}} + \frac{\lambda T_1}{T_\lambda} \frac{\xi_1}{T_1} \bigg) \mathop{}\!\mathrm{d} s \\
        &\leq T_{\lambda} \int_0^1 \frac{(1 - \lambda) T_0}{T_{\lambda}} L \bigg( e, \frac{\xi_0}{T_0} \bigg) \mathop{}\!\mathrm{d} s + T_{\lambda} \int_0^1 \frac{\lambda T_1}{T_{\lambda}} L \bigg( e, \frac{\xi_1}{T_1} \bigg) \mathop{}\!\mathrm{d} s \\
        &= (1 - \lambda) T_0 \int_0^1 L \bigg( e, \frac{\xi_0}{T_0} \bigg) \mathop{}\!\mathrm{d} s + \lambda T_1 \int_0^1 L \bigg( e, \frac{\xi_1}{T_1} \bigg) \mathop{}\!\mathrm{d} s,
    \end{aligned}
\end{equation*}
which shows that \( S_{L + \kappa} \) is convex. The claim then follows from the standard fact that the closure of a convex set coincides with its weak closure.\qed
\bibliographystyle{abbrv}
	\bibliography{ref}
\end{document}